\numberwithin{equation}{section}
\def \dis {\displaystyle}
\def \limi {\;\mathop{\longrightarrow}_{n\to\infty}\;}
\def \confai {-\kern -.5em\rightharpoonup}
\def \cqfd {\hfill$\Box$}
\def \div{\mbox{\rm div}}
\def \al {\alpha}
\def \be {\beta}
\def \Ga {\Gamma}
\def \De {\Delta}
\def \ep {\varepsilon}
\def \Om {\Omega}
\def \la {\lambda}
\def \ph {\varphi}
\def \si {\sigma}
\def \Si {\Sigma}
\def \ZZ {\mathbb Z}
\def \RR {\mathbb R}
\def \beq {\begin{equation}}
\def \eeq {\end{equation}}
\def \ba {\begin{array}}
\def \ea {\end{array}}
\def \bs {\bigskip}
\def \ms {\medskip}
\def \ss {\smallskip}
\def \ecart {\noalign{\medskip}}
\newtheorem{Thm}{Theorem}[section]
\newtheorem{Cor}[Thm]{Corollary}
\newtheorem{Pro}[Thm]{Proposition}
\newtheorem{Conj}[Thm]{Conjecture}
\newtheorem{Adef}[Thm]{Definition}
\newtheorem{Arem}[Thm]{Remark}
\newenvironment{Rem}{\begin{Arem}\rm}{\end{Arem}}
\newtheorem{Aappl}[Thm]{Application}
\newtheorem{Aexa}[Thm]{Example}
\newenvironment{Exa}{\begin{Aexa}\rm}{\end{Aexa}}
\newtheorem{Anot}[Thm]{Notation}
\def \refe #1.{(\ref{#1})}
\def \reff #1.{figure~\ref{#1}}
\def \refs #1.{Section~\ref{#1}}
\def \refss #1.{Subsection~\ref{#1}}
\def \refD #1.{Definition~\ref{#1}}
\def \refT #1.{Theorem~\ref{#1}}
\def \refL #1.{Lemma~\ref{#1}}
\def \refC #1.{Corollary~\ref{#1}}
\def \refP #1.{Proposition~\ref{#1}}
\def \refPt #1.{Properties~\ref{#1}}
\def \refR #1.{Remark~\ref{#1}}
\def \refA #1.{Application~\ref{#1}}
\def \refE #1.{Example~\ref{#1}}
\def \refN #1.{Notation~\ref{#1}}
\newcounter{marnote}
\title{Isotropic realizability of electric fields \\Êaround critical points}
\author{Marc Briane\footnote{INSA de Rennes \& IRMAR, FRANCE -- mbriane@insa-rennes.fr}}
\begin{document}
\maketitle
\begin{abstract}
In this paper we study the isotropic realizability of a given regular gradient field $\nabla u$ as an electric field, namely when $\nabla u$ is solution of the equation $\div\left(\si\nabla u\right)=0$ for some isotropic conductivity $\si>0$. The case of a function $u$ without critical point was investigated in~\cite{BMT} thanks to a gradient flow approach. The presence of a critical point needs a specific treatment according to the behavior of the dynamical system around the point. The case of a saddle point is the most favorable and leads us to a characterization of the local isotropic realizability through some boundedness condition involving the laplacian of $u$ along the gradient flow. The case of a sink or a source implies a strong maximum principle under the same boundedness condition. However, when the critical point is not hyperbolic the isotropic realizability is not generally satisfied even piecewisely in the neighborhood of the point. The isotropic realizability in the torus for periodic gradient fields is also discussed in particular when the trajectories of the gradient system are bounded.
\end{abstract}
\noindent
{\bf Keywords:} electric field, isotropic conductivity, gradient system, critical point
\par\bs\noindent
{\bf Mathematics Subject Classification:} 35B27, 78A30, 37C10
\section{Introduction}
The starting point of the present paper is the following issue: given a gradient field $\nabla u$ from $\RR^d$ into $\RR^d$, under which conditions $\nabla u$ is an isotropically realizable electric field, namely there exists an isotropic conductivity $\si>0$ such that $\div\left(\si\nabla u\right)=0$ in $\RR^d$\,?
A quite complete answer is given in \cite{BMT} when $u$ is regular and $\nabla u$ is periodic. On the one hand, assuming that $u\in C^3(\RR^d)$ and
\beq\label{Du­0}
\inf_{\RR^d}|\nabla u|>0,
\eeq
we may construct a continuous conductivity $\si$ in $\RR^d$ (this is a straightforward extension of Theorem~2.15 in \cite{BMT}).
In dimension two this implies that $\nabla u$ can be rectified globally in $\RR^2$ into a constant vector (see Theorem~\ref{thm.globrect} in the Appendix). In contrast the rectification theorem only applies locally for an arbitrary smooth non-vanishing vector field (see, {\em e.g.}, \cite{Arn}).
On the other hand, when $\nabla u$ is periodic, it is not always possible to derive a periodic conductivity under the sole condition~\refe{Du­0}..
Actually, the Theorem~2.17 of \cite{BMT} shows that $\nabla u$ is isotropically realizable in the torus under the extra assumption
\beq\label{bound}
\sup_{x\in\RR^d}\left|\,\int_0^{\tau(x)}\De u\big(X(s,x)\big)\,ds\,\right|<\infty,
\eeq
where $X$ is the gradient flow defined by
\beq\label{X}
\left\{\ba{ll}
X'(t,x)=\nabla u\big(X(t,x)\big), & \mbox{for }t\in\RR
\\ \ecart
X(0,x)=x, & 
\ea\right.
\eeq
and $\tau(x)$ is the time (unique by \refe{Du­0}.) for the flow $X(\cdot,x)$ to reach the equipotential $\{u=0\}$.
Moreover, the boundedness condition \refe{bound}. is also necessary to derive a periodic conductivity $\si$ in~$C^1(\RR^d)$.
It is then natural to ask what happens when condition \refe{Du­0}. does not hold due to the existence of critical points.
This paper provides partial answers to this question.
\par\bs
We essentially study the realizability of a gradient field in the neighborhood of an isolated critical point. The originality of this local problem lies in the following fact: Surprisingly, the boundedness assumption \refe{bound}. which in \cite{BMT} is useless for the realizability around any regular point but necessary for the global realizability in the torus, turns out to be crucial in its local version to derive the realizability around an isolated critical point. Consider a function $u\in C^2(\RR^d)$, for $d=2,3$, and a point $x_*\in\RR^d$ such that
\beq\label{isocri}
x_*\mbox{ is an isolated critical point of $u$}.
\eeq
The issue is to know when $\nabla u$ is isotropically realizable in the neighborhood of $x_*$. To this end the gradient system \refe{X}. plays an important role as in \cite{BMT}. In view of \refe{bound}. we establish a strong connection between the isotropic realizability of $\nabla u$ around the critical point $x_*$ and the boundedness of the function
\beq\label{intLapuX}
(t,x)\longmapsto\int_0^t\De u\big(X(s,x)\big)\,ds.
\eeq
More precisely, we study the local isotropic realizability according to the nature of the critical point $x_*$. The following cases are investigated:
\begin{itemize}
\item $x_*$ is a saddle point, {\em i.e.} $\nabla^2 u(x_*)$ is invertible with both positive and negative eigenvalues;
\item $x_*$ is a sink (resp. a source), {\em i.e.} $\nabla^2 u(x_*)$ is negative (resp. positive) definite, or more generally, $x_*$ is a stable point for the gradient system \refe{X}. (see definition~\refe{stable}. below);
\item $x_*$ is not a hyperbolic point, {\em i.e.} $\det\big(\nabla^2 u(x_*)\big)=0$ and $x_*$ is not stable.
\end{itemize}
\par
When $x_*$ is a saddle point, we prove (see Theorem~\ref{thm.saddle}) that the local isotropic realizability is (in some sense) equivalent to a local version of the bound~\refe{bound}. combined with \refe{isocri}.. In Section~\ref{ss.app} the two-dimensional example $u(x,y)=f(x)+g(y)$ illustrates the sharpness of the boundedness condition which also implies that $\De u(x_*)=0$ (see Remark~\ref{rem.Lapux*}). Moreover, in spite of its simplicity this example shows that  the question of realizability around a critical point is rather delicate (see Section~\ref{ss.prel} and Proposition~\ref{pro.fg}). In particular, the condition $\De u(x_*)=0$ turns out to be not sufficient to derive the isotropic realizability in the neighborhood of the point~$x_*$. Indeed, we construct a function $u\in C^2\big([-1,1]^2\big)$ which admits $(0,0)$ as a saddle point satisfying $\De u(0,0)=0$, but the gradient of which is not isotropically realizable around~$(0,0)$ (see Proposition~\ref{pro.fg}~$iii)$).
\par
In the case of a sink or a source the boundedness of the function \refe{intLapuX}. leads us to a strong maximum principle (see Theorem~\ref{thm.stable}). When $x_*$ is not a hyperbolic point, the situation is much more intricate. We study a two-dimensional example where the isotropic realizability is only satisfied in some regions around $x_*$, again in connection with condition~\refe{bound}. (see Proposition~\ref{pro.deg}).
\par
We conclude the paper with the isotropic realizability problem in the torus. The natural extension of \cite{BMT} (Theorem~2.17) is that any regular periodic gradient field which vanishes at isolated points is isotropically realizable provided that the boundedness condition \refe{bound}. holds (see Conjecture~\ref{conj1}). We prove this result under the additional assumption that the trajectories of \refe{X}. are bounded (see Theorem~\ref{thm.globsing}), and we illustrate it by Proposition~\ref{pro.ui}. At this level the dimension two is quite particular. Indeed, by virtue of \cite{AlNe} (see also \cite{BaMN} for the non-periodic case) a non-zero periodic gradient field which is isotropically realizable with a smooth periodic conductivity does not vanish in $\RR^2$, and the trajectories of the gradient system \refe{X}. are then unbounded (see Remark~\ref{rem.d=2}). This shows that the boundedness of the trajectories together with the presence of critical points is a reasonable assumption in the periodic framework. 
\section{The case of a saddle point}
\subsection{A preliminary remark}\label{ss.prel}
Let $u\in C^2(\RR^d)$, for $d\geq 2$, and let $x_*$ be a non-degenerate critical point of $u$, namely $\nabla u(x_*)=0$ and the hessian matrix $\nabla^2 u(x_*)$ is invertible. By Morse's lemma (see, {\em e.g.}, \cite{Mil} Lemma~2.2) there exist a $C^2$-diffeomorphism $\Phi$ from an open neighborhood $V_*$ of $x_*$ onto an open set $W_*$, and an integer $m\in\{0,\dots,d\}$ independent of $V_*$ such that
\beq\label{M}
v(y):=(u\circ\Phi^{-1})(y)-u(x_*)=-\sum_{j=1}^m y_j^2+\sum_{j=m+1}^d y_j^2\,,\quad\forall\,y\in W_*.
\eeq
Assume that the gradient field $\nabla v$ is isotropically realizable with a smooth conductivity $\tau>0$ in $W_*$, namely $\div\big(\tau\nabla v\big)=0$ in $W_*$. This implies that $\De v\big(\Phi(x_*)\big)=0$ by virtue of Remark~\ref{rem.Lapux*} below, and thus $d=2m$. Conversely, if $d=2m$, the function $v$ is harmonic and $\nabla v$ is isotropically realizable with the conductivity $\tau\equiv 1$. Hence, by the change of variables $y=\Phi(x)$ we get that
\beq\label{Phisi}
\div\left(\si\nabla u\right)=0\;\;\mbox{in }V_*,\quad\mbox{with}\quad\si:=\det\left(\nabla\Phi\right)\nabla\Phi^{-1}(\nabla\Phi^{-1})^T.
\eeq
Therefore, when $d=2m$, the gradient $\nabla u$ is realizable with the smooth conductivity $\si$ of \refe{Phisi}., but which is {\em a priori} anisotropic. We will see that the isotropic realizability of $\nabla u$ around the point $x_*$ is more delicate to obtain. In particular the equality $\De u(x_*)=0$ is generally not a sufficient condition of isotropic realizability contrary to the case of the quadratic function~\refe{M}..
\subsection{The general result}
Let $u\in C^2(\RR^d)$, for $d=2,3$. Consider a critical point $x_*$ of $u$, which is also a saddle point of $u$ in the sense that 
\beq\label{saddle}
\nabla^2 u(x_*)\mbox{ has a non-zero determinant with both positive and negative eigenvalues.}
\eeq
Without loss of generality we may also assume that $u(x_*)=0$.
\par
The Hadamard-Perron Theorem (see, {\em e.g.}, \cite{AnAr} p.~56, and \cite{HSD} Section~8.3) claims that in some compact neighborhood $K_*$ of $x_*$, containing no extra critical point, there exist two smooth invariant manifolds of the flow \refe{X}.: $\Ga^s$ of dimension $k$ and $\Ga^u$ of dimension $d-k$ (which are smooth curves for in dimension two), such that
\begin{itemize}
\item $\Ga^s\cap\Ga^u=\{x_*\}$,
\item $\Ga^s$ contains only stable trajectories of \refe{X}. (converging to $x_*$ as $t\to\infty$) and $\Ga^u$ contains only unstable trajectories (converging to $x_*$ as $t\to-\infty$),
\item for any $x\in K_*\setminus(\Ga^s\cup\Ga^u)$, the trajectory $X(t,x)$ leaves $K_*$ as $|t|$ increases.
\end{itemize}
Note that we have for any $(t,x)\in\RR\times(K_*\setminus\{x_*\})$ with $X(t,x)\in{\rm int}\left(K_*\right)$,
\beq\label{dtuX}
{\partial\over\partial t}\big(u(X(t,x)\big)=\left|\nabla u\big(X(t,x)\big)\right|^2>0,
\eeq
which implies that the function $u\big(X(\cdot,x)\big)$ is increasing. 
Hence, $u\big(X(\cdot,x)\big)$ is negative for $x\in\Ga^s\setminus\{x_*\}$ (due to $u\big(X(t,x)\big)\to 0$ as $t\to\infty$) and is positive for $x\in\Ga^u\setminus\{x_*\}$ (due to $u\big(X(t,x)\big)\to 0$ as $t\to-\infty$). So, each connected component of the equipotential $\{u=0\}\setminus\{x_*\}$ lies between two connected components of $\Ga^s\setminus\{x_*\}$ and $\Ga^u\setminus\{x_*\}$.
Therefore, it is relevant to assume that there exists an open neighborhood $Q_*$ of $x_*$, with $Q_*\subset{\rm int}\left(K_*\right)$, such that
\beq\label{taux}
\forall\,x\in Q_*\setminus(\Ga^s\cup\Ga^u),\ \exists\,\tau(x)\in\RR,\quad
\left\{\ba{l}
X(t,x)\in {\rm int}\left(K_*\right)\mbox{ for }t\in[0,\tau(x)]
\\ \ecart
X\big(\tau(x),x\big)\in\{u=0\}\cap{\rm int}\left(K_*\right),
\ea\right.
\eeq
namely each trajectory $X(\cdot,x)$, for $x\in Q_*\setminus(\Ga^s\cup\Ga^u)$, meets the equipotential $\{u=0\}$ at an interior point of $K_*$.
Moreover, since by \refe{dtuX}. the function $u\big(X(\cdot,x)\big)$ is increasing in the open interval containing $\tau(x)$:
\beq
\big\{t\in\RR\,:\,X(t,x)\in{\rm int}\left(K_*\right)\big\},
\eeq
the time $\tau(x)$ is uniquely determined (see \reff{fig1}. for a picture of the situation).
\par
On the other hand, if $u\in C^3(\RR^d)$ the gradient flow $X$ defined by \refe{X}. is in $C^1(\RR\times\RR^d)$. By virtue of the implicit functions theorem, the regularity of $X$ and \refe{dtuX}. imply that $\tau$ belongs to $C^1\big(Q_*\setminus(\Ga^s\cup\Ga^u)\big)$. Then, we may define the function $w$ in $Q_*\setminus(\Ga^s\cup\Ga^u)$ by
\beq\label{wtau}
w(x):=\int_0^{\tau(x)}\De u\big(X(s,x)\big)\,ds,\quad\mbox{for }x\in Q_*\setminus(\Ga^s\cup\Ga^u),
\eeq
which belongs to $C^1\big(Q_*\setminus(\Ga^s\cup\Ga^u)\big)$. If $u$ is only in $C^2(\RR^d)$, then $w$ does not belong necessarily to $C^1\big(Q_*\setminus(\Ga^s\cup\Ga^u)\big)$. However, in the sequel we will assume that
\beq\label{wC1}
u\in C^2(\RR^d)\quad\mbox{and}\quad w\in C^1\big(Q_*\setminus(\Ga^s\cup\Ga^u)\big).
\eeq
\par
We have the following result:
\begin{Thm}\label{thm.saddle}
Let $u\in C^2(\RR^d)$, for $d=2,3$, and let $x_*\in\RR^d$ be such that conditions \refe{isocri}., \refe{saddle}., \refe{taux}. and \refe{wC1}. hold with an open neighborhood $Q_*$ and a compact neighborhood $K_*$ of $x_*$ satisfying $Q_*\subset{\rm int}\left(K_*\right)$.
\begin{itemize}
\item[$i)$] Assume that $w$ belongs to $L^\infty(Q_*)$. Then, $\nabla u$ is isotropically realizable in $Q_*$, with a positive conductivity $\si$ such that $\si,\si^{-1}\in L^\infty(Q_*)\cup C^1\big(Q_*\setminus(\Ga^s\cup\Ga^u)\big)$.
\item[$ii)$] Conversely, assume that $\nabla u$ is isotropically realizable in the interior of $K_*$, with a conductivity $\si$ such that $\si,\si^{-1}\in L^\infty\big({\rm int}\left(K_*\right)\big)\cup C^1\big({\rm int}\left(K_*\right)\setminus(\Ga^s\cup\Ga^u)\big)$. Then, the function $w$ belongs to $L^\infty(Q_*)$.
\end{itemize}
\end{Thm}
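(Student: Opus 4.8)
The plan is to exploit the classical logarithmic substitution that turns the realizability equation into a transport equation along the gradient flow~\refe{X}.. Writing $\si=e^{\psi}$, the equation $\div(\si\nabla u)=0$ becomes $\nabla\psi\cdot\nabla u=-\De u$ on any region where $\si$ is $C^1$, and since $X'(t,x)=\nabla u(X(t,x))$ this reads $\frac{d}{dt}\,\psi(X(t,x))=-\De u(X(t,x))$. Integrating from $0$ to $\tau(x)$ with the normalization $\psi\equiv 0$ on $\{u=0\}$ forces $\psi=w$, so the natural candidate conductivity is $\si=e^{w}$. Both implications will follow from making this correspondence rigorous.

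For part~$i)$ I would first check that $\si:=e^{w}$ solves the equation on $Q_*\setminus(\Ga^s\cup\Ga^u)$. The key point is the identity $\nabla w\cdot\nabla u=-\De u$, which I would derive from the semigroup property $X(s,X(t,x))=X(s+t,x)$ and the relation $\tau(X(t,x))=\tau(x)-t$ (the trajectory through $X(t,x)$ reaching $\{u=0\}$ at the absolute time $\tau(x)$). Together these give $w(X(t,x))=\int_t^{\tau(x)}\De u(X(r,x))\,dr$, and differentiating at $t=0$ yields $\nabla w(x)\cdot\nabla u(x)=-\De u(x)$. Since $w\in C^1(Q_*\setminus(\Ga^s\cup\Ga^u))$ by~\refe{wC1}., this produces $\div(e^{w}\nabla u)=0$ classically off the invariant manifolds; moreover $w\in L^\infty(Q_*)$ makes $\si=e^{w}$ bounded above and below, hence $\si,\si^{-1}\in L^\infty(Q_*)$, both being $C^1$ on $Q_*\setminus(\Ga^s\cup\Ga^u)$ since $w$ is.

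The main obstacle is to upgrade this into a genuine distributional solution across $\Ga^s\cup\Ga^u$, so that $\div(\si\nabla u)=0$ holds in all of $Q_*$. Here I would note that $\si\nabla u\in L^\infty(Q_*)$ (a bounded function times a continuous one), so the only possible distributional defect of $\div(\si\nabla u)$ is a measure supported on the lower-dimensional set $\Ga^s\cup\Ga^u$ together with the point $x_*$. Testing against $\ph\in C_c^\infty(Q_*)$ and integrating by parts on each of the finitely many sectors into which the manifolds divide $Q_*$ (on each of which $\si$ is $C^1$), the interface terms take the form $\int\si\,(\nabla u\cdot\nu)\,\ph$ along $\Ga^s$ and $\Ga^u$; since these manifolds are unions of trajectories, $\nabla u$ is tangent to them, $\nabla u\cdot\nu=0$, and every interface contribution vanishes. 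The neighbourhood of $x_*$ is harmless because $\nabla u(x_*)=0$ forces the flux through a small sphere about $x_*$ to tend to zero. This gives $\div(\si\nabla u)=0$ in $\D'(Q_*)$ and finishes~$i)$.

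For the converse~$ii)$ I would run the same ODE backwards. As $\si,\si^{-1}\in L^\infty$ and $\si$ is $C^1$ off $\Ga^s\cup\Ga^u$, and as every trajectory issuing from $Q_*\setminus(\Ga^s\cup\Ga^u)$ stays in that set by invariance of the manifolds, the transport identity $\frac{d}{dt}\log\si(X(t,x))=-\De u(X(t,x))$ holds along the whole trajectory up to time $\tau(x)$. Integrating from $0$ to $\tau(x)$ gives $w(x)=\log\si(x)-\log\si(X(\tau(x),x))$; since $X(\tau(x),x)\in\mathrm{int}(K_*)$ where $\log\si$ is bounded (from $\si,\si^{-1}\in L^\infty$), I obtain $|w(x)|\le 2\,\|\log\si\|_{L^\infty}$, hence $w\in L^\infty(Q_*)$ (the excluded set being negligible). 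The only delicate step is the passage from the distributional equation to this pointwise transport identity, which is legitimate precisely because the relevant trajectories avoid the set where $\si$ may fail to be $C^1$.
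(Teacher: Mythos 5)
Your proposal is correct and follows essentially the same route as the paper: the candidate conductivity $\si=e^{w}$, the transport identity $\nabla w\cdot\nabla u=-\De u$ obtained from the semigroup property and $\tau\big(X(t,x)\big)=\tau(x)-t$, the sector-by-sector integration by parts with interface terms killed by tangency of $\nabla u$ to $\Ga^s\cup\Ga^u$, and the converse by integrating $\tfrac{d}{dt}\log\si\big(X(t,x)\big)=-\De u\big(X(t,x)\big)$ up to time $\tau(x)$. The only cosmetic difference is that the paper removes the critical point (and, in dimension three, a tube around the one-dimensional invariant curve) via an explicit Lipschitz cutoff $v_\ep$ converging to $1$ in $W^{1,1}$, whereas you invoke the equivalent observation that the flux of the bounded field $\si\nabla u$ through small spheres (or tubes) vanishes in the limit.
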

\begin{Rem}\label{rem.Lapux*}
In the part $i)$ of Theorem~\ref{thm.saddle}, the regularity of the conductivity $\si$ implies that $\De u(x_*)=0$.
Indeed, when $\sigma\in C^1(Q_*)$ we have
\beq
\div\left(\si\nabla u\right)(x_*)=\nabla\si(x_*)\cdot\nabla u(x_*)+\si(x_*)\,\De u(x_*)=\si(x_*)\,\De u(x_*)=0.
\eeq
More generally, having in mind the part $ii)$ of Theorem~\ref{thm.saddle}, the boundedness of the function $w$ \refe{wtau}. also implies that $\De u(x_*)=0$. Indeed, taking up a heuristic point of view, consider a sequence $x_n$ in $K_*\setminus(\Ga^s\cup\Ga^u)\big)$ which converges to some $x\in(\Ga^s\cup\Ga^u)\setminus\{x_*\}$, say $x\in\Ga^s\setminus\{x_*\}$. The sequence $|\tau(x_n)|$ tends to $\infty$, since the trajectory $X(\cdot,x)$ does not intersect the equipotential $\{u=0\}$. Then, under some suitable assumption satisfied by $\De u\big(X(s,\cdot)\big)$ in the neighborhood of $x_*$ and the fact that $X(t,x)$ tends to $x_*$ as $t\to\infty$, we have
\beq
0=\lim_{n\to\infty}{w(x_n)\over\tau(x_n)}=\lim_{n\to\infty}\fint_0^{\tau(x_n)}\!\!\De u\big(X(s,x_n)\big)\,ds
=\lim_{n\to\infty}\fint_0^{\tau(x_n)}\!\!\De u\big(X(s,x)\big)\,ds=\De u(x_*).
\eeq
This will be rigorously checked in the particular case of subsection~\ref{ss.app} (see Proposition~\ref{pro.fg} $i)$).
However, Proposition~\ref{pro.fg} $iii)$ will show that the equality $\De u(x_*)=0$ is not sufficient to get the isotropic realizability of $\nabla u$ around the point $x_*$.
\end{Rem}
\noindent
{\bf Proof of Theorem~\ref{thm.saddle}.}
\par\ms\noindent{\it Proof of $i)$.}
Assume that $w\in L^\infty(Q_*)$. Then, the conductivity defined by $\si:=e^w$ and $\si^{-1}$ belong to $L^\infty(\Om)\cap C^1\big(Q_*\setminus(\Ga^s\cup\Ga^u)\big)$. Moreover, following the proof of Theorem~2.14 in~\cite{BMT} but with the weaker regularity \refe{wC1}., the equation $\div\left(\si\nabla u\right)=0$ holds in each connected component of $Q_*\setminus(\Ga^s\cup\Ga^u)$. For the reader's convenience we recall the main steps of the proof:
\par
Let $\Om$ be a connected component of $Q_*\setminus(\Ga^s\cup\Ga^u)$. First note that, since $u\in C^2(\RR^d)$, the flow $X$ belongs to $C^0(\RR\times\RR^d)$. Hence, for any $x\in\Om$ and $t$ close to $0$ the trajectory $X(t,x)$ remains in $\Om$. Then, due to the semi-group property satisfied by the gradient flow $X$ \refe{X}. combined with the uniqueness of $\tau$, we have for any $x\in\Om$ and any $t$ close to $0$,
\beq
\tau\big(X(t,x)\big)=\tau(x)-t,
\eeq
which, using successively the semi-group property and the change of variable $r=s+t$, yields 
\beq
\ba{ll}
\dis w\big(X(t,x)\big)=\int_0^{X(t,x)}\De u\big(X(s,X(t,x)\big)\,ds & \dis =\int_0^{\tau(x)-t}\De u\big(X(s+t,x)\big)\,ds
\\ \ecart
& \dis =\int_t^{\tau(x)}\De u\big(X(r,x)\big)\,dr.
\ea
\eeq
Taking the derivative with respect to $t$, which is valid since $w\in C^1(\Om)$ by \refe{wC1}., it follows that
\beq
{\partial\over\partial t}\left[w\big(X(t,x)\big)\right]=\nabla w\big(X(t,x)\big)\cdot\nabla u\big(X(t,x)\big)=-\,\De u\big(X(t,x)\big).
\eeq
Therefore, for $t=0$ we get that
\beq
\nabla w(x)\cdot\nabla u(x)=-\,\De u(x),\quad\forall\,x\in\Om,
\eeq
which combined with $\si=e^w$ implies that
\beq
\div\left(\si\nabla u\right)=\si\left(\nabla w\cdot\nabla u+\De u\right)=0\quad\mbox{in }\Om.
\eeq
\par
It thus remains to prove that the equation is satisfied in $Q_*$. To this end we distinguish the cases $d=2$ and $d=3$.
\par
First assume that $d=2$. Then, $\Ga^s$ and $\Ga^u$ are two smooth curves in $K_*$ which only intersect at the point $x_*$. Consider $\ep>0$ such that the open ball $B(x_*,\ep)$ centered on $x_*$ and of radius $\ep$, is contained in $Q_*$. Let $\ph_\ep$ be a Lipschitz function in $Q_*$, with compact support in $Q_*$ and $\ph_\ep\equiv 0$ in $B(x_*,\ep)$. Let $\Om$ be a connected component of $Q_*\setminus(\Ga^s\cup\Ga^u)$. Note that $\si\nabla u$ is a divergence free vector-valued function in $L^\infty(\Om)^d$, thus has a trace on $\partial\Om$. Then, integrating by parts we have
\beq
\int_\Om\si\nabla u\cdot\nabla\ph_\ep\,dx=\int_{\partial\Om\cap\Ga^s}\si\nabla u\cdot n\,\ph_\ep\,ds+\int_{\partial\Om\cap\Ga^u}\si\nabla u\cdot n\,\ph_\ep\,ds,
\eeq
where $n$ denotes the normal outside to $\partial\Om$. However, since $\Ga^s$ and $\Ga^u$ are trajectories of the gradient system \refe{X}., at each point $x$ of $\partial\Om\cap(\Ga^s\cup\Ga^u)$ the normal $n$ is orthogonal to the tangent vector $X'(0,x)=\nabla u\big(X(0,x)\big)=\nabla u(x)$ to the curve $\Ga^s$ or $\Ga^u$. Therefore, we get that
\beq\label{intphep}
\int_\Om\si\nabla u\cdot\nabla\ph_\ep\,dx=0.
\eeq
Let $\ph\in C^\infty_c(Q_*)$, and set $\ph_\ep:=\ph\,v_\ep$ where $v_\ep$ is defined by
\beq\label{vep}
v_\ep(x):=\left\{\ba{cl}
0 & \mbox{if }|x|<\ep
\\ \ecart
\dis 2-{2\ep\over|x|} & \mbox{if } \ep\leq |x|\leq 2\ep
\\ \ecart
1 & \mbox{if }|x|>2\ep,
\ea\right.
\eeq
so that $\ph_\ep$ is a Lipschitz function in $Q_*$, with compact support in $Q_*$ and $\ph_\ep\equiv 0$ in $B(x_*,\ep)$.
Using that $v_\ep$ converges strongly to $1$ in $W^{1,1}(\Om)$ and $\si\nabla u\in L^\infty(\Om)^d$, we deduce from equality \refe{intphep}. that
\beq
\int_\Om\si\nabla u\cdot\nabla\ph\,dx=\int_\Om\si\nabla u\cdot\nabla\big(\ph\,(1-v_\ep)\big)\,dx+
\int_\Om\si\nabla u\cdot\nabla\ph_\ep\,dx=o(1),
\eeq
for each connected component $\Om$ of $Q_*\cap(\Ga^s\cup\Ga^u)$. It follows that $\si\nabla u$ is divergence free in~$Q_*$.
\par
In dimension three one of the manifold $\Ga^s$ or $\Ga^u$ is a curve $\Ga$, while the other one is a smooth surface $\Si=\{f=0\}$ composed of trajectories \refe{X}.. Consider a Lipschitz function $\ph_\ep$ with compact support in $Q_*$, which is zero in a tube of radius $\ep$ surrounding the curve $\Ga$ and containing the ball $B(x_*,\ep)$. For any $x\in\Si$ and $t$ close to $0$, the derivative of $f\big(X(t,x)\big)=0$ at $t=0$ yields $\nabla f\big(X(0,x)\big)\cdot X'(0,x)=\nabla f(x)\cdot\nabla u(x)=0$. Hence, the normal at each point $x$ of $\Si$ is orthogonal to $\nabla u(x)$, which again leads us to equality \refe{intphep}.. Therefore, passing to the limit as $\ep\to 0$ we obtain that $\si\nabla u$ is divergence free in $Q_*$.
\par\ms\noindent{\it Proof of $ii)$.}
Conversely, assume that in each connected component $\Om$ of ${\rm int}\left(K_*\right)\setminus(\Ga^s\cup\Ga^u)$, there exists $w_\Om\in L^\infty(\Om)\cap C^1(\Om)$ such that $\div\left(e^{w_\Om}\nabla u\right)=0$ in $\Om$. Then, we have
\beq
\nabla w_\Om(x)\cdot\nabla u(x)+\De u(x)=0,\quad\forall\,x\in\Om.
\eeq
This combined with \refe{taux}. implies that for any $x\in\Om\cap Q_*$,
\beq
\ba{ll}
\dis w_\Om(x)-w_\Om\big(X(\tau(x),x)\big) & \dis =-\int_0^{\tau(x)}{\partial \over\partial s}\left[w_\Om\big(X(s,x)\big)\right]ds
\\ \ecart
& \dis =\int_0^{\tau(x)}\De u\big(X(s,x)\big)\,ds=w(x),
\ea
\eeq
where $X(\tau(x),x)\in{\rm int}\left(K_*\right)$ by \refe{taux}..
Therefore, $w$ is in $L^\infty(\Om\cap Q_*)$ and thus in $L^\infty(Q_*)$. \cqfd
\begin{Rem}
If the function $w$ is not bounded in $Q_*$, the previous proof then shows that the function $\si:=e^w$ belongs to $C^1\big(Q_*\setminus(\Ga^s\cup\Ga^u)\big)$ and blows up near $\Ga^s\cup\Ga^u$. However, the equation $\div\left(\si\nabla u\right)=0$ still holds in $Q_*\setminus\{x\}$.
\end{Rem}
The following two-dimensional application shows that the boundedness of the function \refe{wtau}. is crucial for deriving the isotropic realizability of $\nabla u$ in the neighborhood of $x_*=(0,0)$, and that the sole equality $\De u(x_*)=0$ is not sufficient (see Proposition~\ref{pro.fg}). 
\subsection{Application}\label{ss.app}
To lighten the notations a point of $\RR^2$ is denoted by $(x,y)$ in this section.
Let $K_*:=[-\al,\al]^2$, for $\al>0$. Consider the function $u\in C^2(K_*)$ defined by
\beq\label{ufg}
u(x,y):=f(x)+g(y),\quad\mbox{for }(x,y)\in K_*,
\eeq
where the functions $f,g:[-\al,\al]\to\RR$ satisfy the following properties:
\beq\label{fg0}
f(0)=g(0)=0,\quad f'(0)=g'(0)=0,\quad f''(0)\,g''(0)<0,
\eeq
\beq\label{f'g'}
\left\{\ba{lll}
\forall\,x\in(0,\al], & f'(x)>0, & g'(x)<0,
\\ \ecart
\forall\,x\in[-\al,0), & f'(x)<0, & g'(x)>0.
\ea\right.
\eeq
Then, $u$ satisfies the assumptions of the general framework with the saddle point $(0,0)$ and the curves
\beq
\Ga^s=\{0\}\times[-\al,\al]\quad\mbox{and}\quad\Ga^u=[-\al,\al]\times\{0\}.
\eeq
The more delicate point to check is condition \refe{taux}.. To this end define the functions $F,G$ in $[-\al,\al]\setminus\{0\}$ by
\beq\label{FG}
\left\{\ba{lll}
\dis F(x):=\int_\al^x{dt\over f'(t)}<0, & \dis G(x):=\int_\al^x{dt\over g'(t)}>0, & \mbox{if }x\in(0,\al],
\\ \ecart
\dis F(x):=\int_{-\al}^x{dt\over f'(t)}<0, & \dis G(x):=\int_{-\al}^x{dt\over g'(t)}>0, & \mbox{if }x\in[-\al,0),
\ea\right.
\eeq
which are one-to-one from one of the two intervals $(0,\al]$ or $[-\al,0)$ onto one of the two intervals $(-\infty,0]$ or $[0,\infty)$.
Then, for any $(x,y)\in K_*$, the solution of \refe{X}. is given by
\beq\label{XFG}
X(t,x,y)=\big(F^{-1}(t+F(x)),G^{-1}(t+G(y))\big),\quad\mbox{for }t\in\big[\!-\!G(y),-F(x)\big],
\eeq
where $F^{-1},G^{-1}$ denote respectively the reciprocals of the functions $F,G$ restricted to each interval $(0,\al]$ or $[-\al,0)$.
Now, for $(x,y)\in K_*\setminus(\Ga^s\cup\Ga^u)$:
\begin{itemize}
\item if $u(x,y)<0$, consider
\beq
u\big(X(-F(x),x,y)\big)=f(\pm\al)+g\big(G^{-1}(-F(x)+G(y))\big)\mathop{\approx}_{x,y\to 0} f(\pm\al)+g(0)>0,
\eeq
with $+$ if $x>0$ and $-$ if $x<0$;
\item if $u(x,y)>0$, consider
\beq
u\big(X(-G(y),x,y)\big)=f\big(F^{-1}(-G(y)+F(x))\big)+g(\pm\al)\mathop{\approx}_{x,y\to 0} f(0)+g(\pm\al)<0,
\eeq
with $+$ if $y>0$ and $-$ if $y<0$.
\end{itemize}
Moreover, $u\big(X(\cdot,x,y)\big)$ is increasing on $\big[\!-\!G(y),-F(x)\big]$.
Then, defining the open set $Q_*$ by
\beq\label{Q*}
Q_*:=\left\{(x,y)\in{\rm int}\left(K_*\right)\,:\,
\ba{l}
f(\pm\al)+g\big(G^{-1}(-F(x)+G(y))\big)>0
\\*[.1cm]
g(\pm\al)+f\big(F^{-1}(-G(y)+F(x))\big)<0
\ea
\right\},
\eeq
with the convention $F(0)=-\infty$ and $G(0)=\infty$, we obtain that
\beq\label{tau(x,y)}
\forall\,(x,y)\in Q_*\setminus(\Ga^s\cup\Ga^u),\ \exists!\,\tau(x,y)\in\big(\!-\!G(y),-F(x)\big),\quad u\big(\tau(x,y),x,y\big)=0.
\eeq
Therefore \refe{taux}. holds.
\par
The next result shows that the isotropic realizability for $\nabla u$ is very sensitive to the boundedness of the function $w$ defined by \refe{wtau}.:
\begin{Pro}\label{pro.fg}
The function $w$ satisfies the regularity assumption \refe{wC1}.. Moreover, we have the following results:
\begin{itemize}
\item[$i)$] If $\nabla u$ is isotropically realizable with $\si$ such that $\si,\si^{-1}\in L^\infty_{\rm loc}(Q_*)\cap C^1\big(Q_*\setminus(\Ga^s\cup\Ga^u)\big)$, then $\De u(0,0)=0$.
\item[$ii)$] Let $u$ be a function satisfying \refe{ufg}., \refe{fg0}., \refe{f'g'}. with $\De u(0,0)=0$. Assume that the functions $F,G$ defined by \refe{FG}. satisfy
\beq\label{bouFG}
F-{\ln|\cdot|\over f''(0)}\quad\mbox{and}\quad G-{\ln|\cdot|\over g''(0)}\quad\mbox{are bounded in a neighborhood of }0.
\eeq
Then, $\nabla u$ is isotropically realizable with $\si$ such that $\si,\si^{-1}\in L^\infty_{\rm loc}(Q_*)\cap C^1\big(Q_*\setminus(\Ga^s\cup\Ga^u)\big)$.
\item[$iii)$] There exists a function $u\in C^2(K_*)$ satisfying \refe{ufg}., \refe{fg0}., \refe{f'g'}. with $\De u(0,0)=0$, such that for any open neighborhood $V_*$ of $(0,0)$, $\nabla u$ is not isotropically realizable with any conductivity $\si$ such that $\si,\si^{-1}\in L^\infty(V_*)\cap C^1\big(V_*\setminus(\Ga^s\cup\Ga^u)\big)$.
\end{itemize}
\end{Pro}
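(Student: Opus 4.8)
\ms\noindent{\bf Proof proposal.}
The guiding idea is to reduce each statement to the boundedness of the function $w$ of \refe{wtau}. and then to apply \refT{thm.saddle}.. The key is an explicit formula for $w$ coming from the separable structure. Denote by $(\xi,\eta):=X\big(\tau(x,y),x,y\big)$ the exit point of the trajectory on $\{u=0\}$. Since $X_1'=f'(X_1)$ and $X_2'=g'(X_2)$, one has ${d\over ds}\ln|f'(X_1(s))|=f''(X_1(s))$ and likewise for $g$, so integrating in \refe{wtau}. along the flow gives
\beq
w(x,y)=\ln\left|{f'(\xi)\over f'(x)}\right|+\ln\left|{g'(\eta)\over g'(y)}\right|,\qquad (x,y)\in Q_*\setminus(\Ga^s\cup\Ga^u).
\eeq
As the primitives $F,G$ of \refe{FG}. are $C^1$ with non-vanishing derivatives off the axes, the exit point is the $C^1$ solution of $f(\xi)+g(\eta)=0$ and $F(\xi)-G(\eta)=F(x)-G(y)$ (implicit function theorem); the formula then shows $w\in C^1\big(Q_*\setminus(\Ga^s\cup\Ga^u)\big)$, which is \refe{wC1}..

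For $i)$, realizability yields, exactly as in \refT{thm.saddle}. $ii)$, that $w$ is bounded near the axes; I would then approach a point $(0,y_0)\in\Ga^s$ with $y_0\in(0,\al)$ along a sequence $(x_n,y_n)\to(0,y_0)$ lying in $\{u<0\}$. One first checks that $\tau_n:=\tau(x_n,y_n)\to+\infty$ (otherwise $F(\xi_n)$ would stay bounded while $F(x_n)\to-\infty$), and deduces $\eta_n,\xi_n\to0$. Since $w$ is bounded, $w(x_n,y_n)/\tau_n\to0$, while the explicit formula turns the two time-averages into difference quotients
\beq
{1\over\tau_n}\int_0^{\tau_n}\!\!f''(X_1)\,ds={\ln|f'(\xi_n)|-\ln|f'(x_n)|\over F(\xi_n)-F(x_n)},\qquad
{1\over\tau_n}\int_0^{\tau_n}\!\!g''(X_2)\,ds={\ln|g'(\eta_n)|-\ln|g'(y_n)|\over G(\eta_n)-G(y_n)}.
\eeq
By Cauchy's mean value theorem the first quotient equals $f''(c_n)$ with $c_n$ between $x_n$ and $\xi_n$, hence tends to $f''(0)$; by l'Hospital's rule (the bounded terms in $y_n$ being negligible against the divergent ones in $\eta_n$) the second tends to $g''(0)$. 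Adding the two gives $\De u(0,0)=f''(0)+g''(0)=0$.

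For $ii)$, put $a:=f''(0)>0$, so $g''(0)=-a$. From $f\in C^2$ with $f''(0)\neq0$ one has $\ln|f'(t)|-\ln|t|\to\ln a$, and together with \refe{bouFG}. this makes $\ln|f'(t)|-a\,F(t)$ bounded near $0$, hence on $(0,\al]$ and on $[-\al,0)$; similarly $\ln|g'(t)|-g''(0)\,G(t)$ is bounded. Inserting these into the explicit formula and using $F(\xi)-F(x)=G(\eta)-G(y)=\tau(x,y)$ one finds
\beq
w(x,y)=f''(0)\big(F(\xi)-F(x)\big)+g''(0)\big(G(\eta)-G(y)\big)+O(1)=\big(f''(0)+g''(0)\big)\,\tau(x,y)+O(1)=O(1),
\eeq
so $w$ is bounded near $(0,0)$ and \refT{thm.saddle}. $i)$ gives the realizability with the announced regularity.

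Part $iii)$ is the crux and where I expect the real difficulty. I would keep $g(y):=-{a\over2}\,y^2$, so that $G$ satisfies \refe{bouFG}. and the $g$-contribution above is $O(1)$, and build $f$ so that $H:=F-a^{-1}\ln|\cdot|$ is \emph{unbounded} at $0$ while $f$ remains $C^2$ with $f''(0)=a$. The first obstacle is precisely this compatibility: a power correction $f'(t)=a\,t\,(1+c\,|t|^\be)$ is forced by $C^2$-regularity to have $\be>0$, which keeps $F-a^{-1}\ln|\cdot|$ bounded; one must therefore use a slowly varying correction such as
\beq
f'(t)=a\,t\left(1+{1\over\ln(1/|t|)}\right)\quad\big(0<|t|\leq\al<1\big),\qquad f'(0)=0,
\eeq
extended oddly. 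A direct computation gives $f\in C^2$, $f''(0)=a$, the required sign conditions, and $a\,H(t)=\ln\ln(1/|t|)+O(1)\to+\infty$. Writing $w=\Phi(\xi)-\Phi(x)+O(1)$ with $\Phi:=\ln|f'|-a\,F=-a\,H+O(1)$, the second obstacle is the dynamical coupling between the base point and its exit point: I must arrange that $w$ blows up while $(x,y)\to(0,0)$ stays in $Q_*$. This I would do in the region $\{u>0\}$, where the exit is backward and $\xi<x$, by prescribing $x_n:=e^{-n}$ and the exit abscissa $\xi_n:=e^{-n^2}$; then $\eta_n\approx\xi_n$ from $f(\xi_n)=-g(\eta_n)$ and the invariant $F(\xi)-G(\eta)=F(x)-G(y)$ forces $y_n\approx\xi_n^2/x_n\to0$, so $(x_n,y_n)\to(0,0)$ with $u>0$, while
\beq
w(x_n,y_n)=-a\big(H(\xi_n)-H(x_n)\big)+O(1)=-\big(\ln\ln(1/\xi_n)-\ln\ln(1/x_n)\big)+O(1)=-\ln n+O(1)\longrightarrow-\infty.
\eeq
Thus $w$ is unbounded in every neighborhood of $(0,0)$, and \refT{thm.saddle}. $ii)$ rules out any isotropic realizability with $\si,\si^{-1}\in L^\infty\cap C^1$, as claimed. \cqfd
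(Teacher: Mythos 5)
Your proposal is correct in substance and follows the same overall strategy as the paper: derive the explicit formula $w=\ln\left|f'(a)g'(b)/(f'(x)g'(y))\right|$ from the separable structure, get the $C^1$ regularity from the implicit function theorem, and reduce each of the three statements to the boundedness or unboundedness of $w$ via Theorem~\ref{thm.saddle}. The execution of parts $i)$ and $iii)$ is, however, genuinely different. For $i)$ the paper extracts from the boundedness of $w$ the relation ``$\ln a+\ln b-\ln x$ bounded'' and combines it with the asymptotics $F(a)\sim\ln a/f''(0)$, $G(b)\sim\ln b/g''(0)$ and $\ln a\sim\ln b$ (from $f(a)+g(b)=0$) to force $\la=-f''(0)/g''(0)=1$; you instead divide $w$ by $\tau_n\to\infty$ and treat the two time-averages as difference quotients, handled by Cauchy's mean value theorem and a dominant-term argument. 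This is precisely the rigorous implementation of the heuristic of Remark~\ref{rem.Lapux*}, and it is arguably cleaner. Your part $ii)$ is the paper's argument streamlined: writing $\ln|f'|=f''(0)F+O(1)$ and $\ln|g'|=g''(0)G+O(1)$ makes $w=(f''(0)+g''(0))\tau+O(1)$ at once, with no case distinction between $y\to y_0>0$ and $y\to 0$. For $iii)$ your counterexample differs from the paper's: the paper defines $F$ implicitly by $x=e^{F+\ln^2|F|}$ and detects the blow-up of $w$ along $x\to 0$ with $y=y_0$ fixed in $\{u<0\}$, whereas you perturb $f'$ by the slowly varying factor $1+1/\ln(1/|t|)$ and exhibit a sequence $(x_n,y_n)\to(0,0)$ inside $\{u>0\}$; both corrections are designed to violate (\ref{bouFG}) while keeping $f\in C^2$ with $f''(0)$ unchanged, and both are valid. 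Two small repairs are needed: the parenthetical justifying $\tau_n\to+\infty$ in $i)$ is garbled as written --- if $\tau_n$ stayed bounded then $F(\xi_n)=\tau_n+F(x_n)\to-\infty$ would force $\xi_n\to 0$, hence $\eta_n\to 0$ by $f(\xi_n)+g(\eta_n)=0$, hence $G(\eta_n)=\tau_n+G(y_n)\to\infty$, contradicting the boundedness of $\tau_n+G(y_n)$; and in $iii)$ you should note explicitly that your sequence eventually lies in the admissible set (\ref{Q*}) attached to any given neighborhood $V_*$ (it does, since $(\xi_n,\eta_n)\to(0,0)$) before invoking Theorem~\ref{thm.saddle}~$ii)$.
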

\begin{Rem}\label{f"g"0}
By \refe{fg0}. and \refe{FG}. we have the following asymptotics
\beq\label{eqFG}
F(x)\;\mathop{\sim}_{0}\;{\ln|x|\over f''(0)}\quad\mbox{and}\quad G(x)\;\mathop{\sim}_{0}\;{\ln|x|\over g''(0)}.
\eeq
Therefore, condition \refe{bouFG}. is stronger than \refe{eqFG}.. We will prove below that condition \refe{bouFG}. combined with the equality $\De u(0,0)=0$, or equivalently $f''(0)=-g''(0)$, implies the boundedness of the function $w$ \refe{wtau}..
Actually, the equality $f''(0)=-g''(0)$ without condition \refe{bouFG}. is not sufficient to ensure the isotropic realizability as shown in Proposition~\ref{pro.fg} $iii)$. However, if $f,g\in C^3\big([-\al,\al]\big)$, then \refe{bouFG}. is satisfied and thus the isotropic realizability holds.
\end{Rem}
\begin{Exa}
Consider the potential $u$ defined by
\beq
u(x,y):=\cos y-\cos x,\quad\mbox{for }(x,y)\in [-\pi,\pi]^2.
\eeq
The origin $(0,0)$ is a saddle point, which leads to the phase portrait of \reff{fig1}. with
\beq
\Ga^s=\{0\}\times[-\pi,\pi]\quad\mbox{and}\quad\Ga^s=[-\pi,\pi]\times\{0\}.
\eeq
A simple computation yields that the time $\tau(x,y)$ of \refe{tau(x,y)}. is given by
\beq
\dis \tau(x,y)={1\over 2}\,\ln\left|\,{\tan(y/2)\over\tan(x/2)}\,\right|,\quad\mbox{for }(x,y)\in (-\pi,\pi)^2\setminus(\Ga^s\cup\Ga^u).
\eeq
Hence, the function $w$ defined by \refe{wtau}. satisfies
\beq
\ba{ll}
w(x,y) & \dis =2\int_0^{\tau(x,y)}\left({1\over 1+\tan^2(x/2)\,e^{2t}}-{1\over 1+\tan^2(y/2)\,e^{-2t}}\right)dt
\\ \ecart
& \dis =\ln\left[{\big(1+\tan^2(x/2)\big)\big(1+\tan^2(y/2)\big)\over\big(1+|\tan(x/2)|\,|\tan(y/2)|\big)^2}\right],
\ea
\eeq
which is continuous in $(-\pi,\pi)^2$ and belongs to $C^1\big((-\pi,\pi)^2\setminus(\Ga^s\cup\Ga^u)\big)$. Therefore, from Theorem~\ref{thm.saddle} $i)$ we deduce that $\div\left(e^w\nabla u\right)=0$ in $(-\pi,\pi)^2$, which can be checked directly.
\end{Exa}
\begin{figure}
\centering
\vskip -3.cm
\includegraphics[scale=.7]{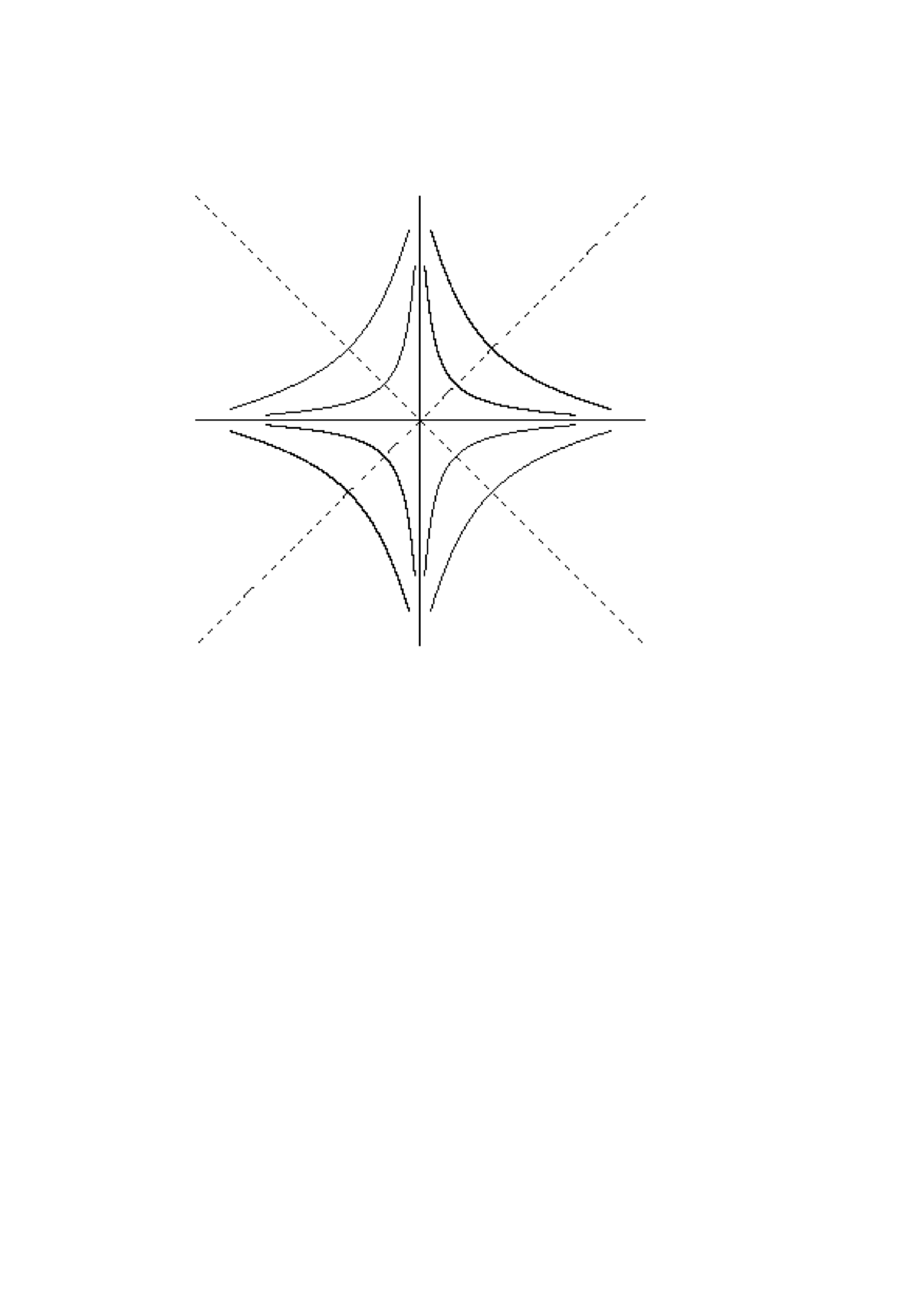}
\vskip -10.5cm
\caption{\it Phase portrait for $u=\cos y-\cos x$ around the saddle point $(0,0)$, with
the stable curve $\{x=0\}$, the unstable curve $\{y=0\}$, and the equipotential $\{u=0\}$ in dash intersecting the trajectories.}
\label{fig1}
\end{figure}
\noindent
{\bf Proof of Proposition~\ref{pro.fg}.}
Let $a,b$ be the functions defined in $Q_*\setminus(\Ga^s\cup\Ga^u)$ by
\beq\label{ab}
a(x,y):=F^{-1}\big(\tau(x,y)+F(x)\big),\;\; b(x,y):=G^{-1}(\tau(x,y)+G(y)\big),\quad(x,y)\in Q_*\setminus(\Ga^s\cup\Ga^u).
\eeq
From now on we simply denote $\tau(x,y)$, $a(x,y)$, $b(x,y)$ respectively by $\tau$, $a$, $b$.
By the definitions \refe{tau(x,y)}. of $\tau$, \refe{ufg}. of $u$ and \refe{XFG}. of $X$, we have
\beq\label{fgtau}
f\left[F^{-1}\big(\tau+F(x)\big)\right]+g\left[G^{-1}\big(\tau+G(y)\big)\right]=0\quad\mbox{for }(x,y)\in Q_*\setminus(\Ga^s\cup\Ga^u),
\eeq
or equivalently,
\beq\label{fgab}
f(a)+g(b)=0\quad\mbox{in }Q_*\setminus(\Ga^s\cup\Ga^u).
\eeq
Then, using \refe{XFG}., \refe{ab}. and the changes of variable $t=F^{-1}(s+F(x))$, $t=G^{-1}(s+G(y))$ in the formula \refe{wtau}. for $w$, we get that for any $(x,y)\in Q_*\setminus(\Ga^s\cup\Ga^u)$,
\beq\label{wfg}
\ba{ll}
w(x,y) & \dis =\int_0^\tau f''\big(F^{-1}(s+F(x))\big)\,ds+\int_0^\tau g''\big(G^{-1}(s+G(y))\big)\,ds
\\ \ecart
& \dis =\int_x^a{f''(t)\over f'(t)}\,dt+\int_y^b{g''(t)\over g'(t)}\,dt=\ln\left|\,{f'(a)\,g'(b)\over f'(x)\,g'(y)}\,\right|.
\ea
\eeq
Next, since the functions $f,g,F,G$ belong to $C^2\big([-\al,\al]\setminus\{0\}\big)$ and by \refe{FG}.
\beq
{\partial\over\partial t}\left[f\big(F^{-1}(t+F)\big)+g\big(G^{-1}(t+G)\big)\right]
=\left[f'\big(F^{-1}(t+F)\big)\right]^2+\left[g'\big(G^{-1}(t+G)\big)\right]^2>0,
\eeq
the implicit functions theorem implies that $\tau$ defined by \refe{fgtau}. belongs to $Q_*\setminus(\Ga^s\cup\Ga^u)$, so do the function $a,b$ defined by \refe{ab}.. Therefore, the formula~\refe{wfg}. shows that $w\in C^1\big(Q_*\setminus(\Ga^s\cup\Ga^u)\big)$, that is \refe{wC1}..
\par\ms\noindent
{\it Proof of $i)$.} Assume that $x\to 0$ with $x>0$, and $y\to y_0>0$. By \refe{FG}. and \refe{ab}. we also have $a,b>0$.
Then, we have $F(x)\to -\infty$, which implies that $\tau+F(x)\to -\infty$.
Indeed, if $\tau+F(x)\geq c$ for some constant $c<0$, then $a\geq F^{-1}(c)$ together with $\tau\to\infty$ and $\tau+G(y)\geq\tau\to\infty$. Therefore, $a\nrightarrow 0$ and $b\to 0$, which contradicts \refe{fgab}.. Similarly, we show that $\tau+G(y)\to \infty$. Therefore, we obtain that $a,b\to 0$, and as a consequence
\beq
{f'(a)\,g'(b)\over f'(x)\,g'(y)}\mathop{\sim}_{x\to 0}{f''(0)\,g''(0)\over g'(y_0)}\,{a\,b\over x}.
\eeq
This combined with the boundedness of $w$ \refe{wfg}. implies that
\beq\label{abx}
\ln a+\ln b-\ln x\quad\mbox{is bounded as}\quad x\to 0,\ y\to y_0.
\eeq
On the other hand, by \refe{eqFG}. we have
\beq
\tau+F(x)=F(a)\mathop{\sim}_{x\to 0}{\ln a\over f''(0)}\quad\mbox{and}\quad
\tau+G(y)=G(b)\mathop{\sim}_{x\to 0}{\ln b\over g''(0)},
\eeq
hence subtracting the two asymptotics and denoting $\la=-f''(0)/g''(0)$, it follows that
\beq\label{xab}
f''(0)\,\big(F(x)-G(y)\big)=\ln x+o(\ln x)=\ln a+o(\ln a)+\la\,\ln b+o(\ln b).
\eeq
However, from \refe{fgab}. we deduce that $f''(0)\,a^2\sim-\,g''(0)\,b^2$ and thus $\ln a\sim\ln b$.
Putting this in \refe{xab}. and using \refe{abx}. we get that
\beq
2\,\ln a+o(\ln a)=\left(1+\la\right)\ln a+o(\ln a),
\eeq
which yields $\la=1$, or equivalently $\De u(0,0)=0$.
\par\ms\noindent
{\it Proof of $ii)$.} For the sake of simplicity let us assume that $f''(0)=-g''(0)=1$.
By virtue of Theorem~\ref{thm.saddle} $i)$ we have to show that the function $w$ of \refe{wtau}. is bounded locally in $Q_*$.
However, since $w\in C^1\big(Q_*\setminus(\Ga^s\cup\Ga^u)\big)$, it is enough to prove that $w(x,y)$ remains bounded as $x\to 0$ or/and $y\to 0$. Let $(x,y)$ be a point of $Q_*\setminus(\Ga^s\cup\Ga^u)$. Without loss of generality we can assume that $x,y>0$, which by \refe{FG}. and \refe{ab}. also implies that $a,b>0$.
\par
First, assume that $x\to 0$ and $y\to y_0>0$. By the part $i)$ we have $a,b\to 0$. Then, taking into account \refe{fg0}. the asymptotics of $f,g$ at the point $0$ with $f''(0)=-g''(0)=1$, and formula \refe{wfg}. imply that
\beq\label{wasy}
w(x,y)\mathop{\sim}_{x\to 0}\ln\left|\,{a\,b\over x\,g'(y_0)}\,\right|.
\eeq
Moreover, due to \refe{bouFG}. there exists a constant $c>0$ such that
\beq\label{estFGln}
|F(x)-\ln x|+|\tau+F(x)-\ln a|+|G(y)+\ln y|+|\tau+G(y)+\ln b|\leq c,
\eeq
which implies that $|\ln a+\ln b-\ln x|$ is bounded as $x\to 0$, $y\to y_0>0$. Putting this estimate in \refe{wasy}. it follows that $w(x,y)$ is bounded as $x\to 0$, $y\to y_0>0$.
\par
The case where $x,y\to 0$ is quite similar using the asymptotic
\beq
w(x,y)\mathop{\sim}_{x,y\to 0}\ln\left({a\,b\over x\,y}\right),
\eeq
combined with estimate \refe{estFGln}..
\par\ms\noindent
{\it Proof of $iii)$.} Set $\al=1$. It is easy to check that there exists a unique one-to-one increasing $C^2$-function $F:(0,1]\to(-\infty,\be]$, defined implicitly by
\beq\label{Fimp}
\forall\,x\in(0,1],\;\;x=e^{\left(F(x)+\ln^2|F(x)|\right)}\quad\mbox{and}\quad F(1)=\be.
\eeq
An easy computation yields ($x\to 0^+$ means that $x\to 0$ with $x>0$)
\beq
\lim_{x\to 0^+}F(x)=-\infty,\quad\lim_{x\to 0^+}F'(x)=\infty\quad\mbox{and}\quad\lim_{x\to 0^+}x\,F'(x)=-\lim_{x\to 0^+}x^2\,F''(x)=1,
\eeq
so we may define the even function $f\in C^2\big([-1,1]\big)$ by
\beq\label{fF}
f(x):=\int_0^x{dt\over F'(t)},\quad\mbox{for }x\geq 0,
\eeq
which satisfies \refe{fg0}. with $f''(0)=1$. Moreover, consider the function $g$ defined by
\beq
g(y):=-{y^2\over 2},\quad\mbox{for }y\in[-1,1],
\eeq
so that $G(y)=-\ln y$ for $y>0$.
Then, by \refe{ab}. and \refe{wasy}., for $x>0$, $x\to 0$ and $y=y_0>0$, we have $b=y_0\,e^{-\tau}$ and
\beq\label{w(x,1)}
w(x,y_0)\mathop{\sim}_{x\to 0}\ln\left({a\,b\over x\,y_0}\right)=\ln a-\ln x-\tau.
\eeq
By \refe{Fimp}. we also have $a=e^{\left(\tau+F(x)+\ln^2|\tau+F(x)|\right)}$, hence
\beq\label{lnaxtau}
\ln a -\ln x-\tau=\ln^2|\tau+F(x)|-\ln^2|F(x)|=\ln\big|1+\tau/F(x)\big|\,\big(\ln|\tau+F(x)|+\ln|F(x)|\big).
\eeq
On the other hand, proceeding as in the proof of $i)$ we have $\tau\to\infty$ and $a,b\to 0$.
Then, the equality \refe{fgab}. $f(a)=b^2/2$ implies that $a\sim b=y_0\,e^{-\tau}$, hence
\beq
\ln\left(a\,e^\tau\right)=2\tau+F(x)+\ln^2|\tau+F(x)|=\ln y_0+o(1)\quad\mbox{with}\quad \tau+F(x)\to-\infty.
\eeq
It follows that
\beq
{2\tau+F(x)\over\tau+F(x)}=1+{1\over 1+F(x)/\tau}=o(1)\quad\mbox{and thus}\quad 2\tau\mathop{\sim}_{x\to 0}-F(x).
\eeq
Putting this asymptotic in \refe{lnaxtau}. together with \refe{w(x,1)}. we get that for any $y_0\in(0,1)$,
\beq
w(x,y_0)\mathop{\sim}_{x\to 0}-2\ln 2\,\ln|F(x)|\;\mathop{\longrightarrow}_{x\to 0}\;\infty.
\eeq
This combined with the fact that $w$ is continuous in $(0,1)^2$, shows that $w$ does not belong to $L^\infty(V_*)$ for any neighborhood $V_*$ of $(0,0)$. Therefore, the part $ii)$ of Theorem~\ref{thm.saddle} allows us to conclude. \cqfd
\section{The case of a sink or a source}
Let $u\in C^2(\RR^d)$, for $d\geq 2$. Consider a point $x_*\in\RR^d$ satisfying \refe{isocri}.. Assume that $x_*$ is stable for the gradient system~\refe{X}., namely there exist a compact neighborhood $K_*$ of $x_*$, containing no extra critical point, and a neighborhood $Q_*$ of $x_*$, with $Q_*\subset{\rm int}\left(K_*\right)$, such that
\beq\label{stable}
\big(\forall\,x\in Q_*,\ \forall\,t\geq 0,\;\; X(t,x)\in K_*\big)\quad\mbox{or}\quad\big(\forall\,x\in Q_*,\ \forall\,t\leq 0,\;\; X(t,x)\in K_*\big).
\eeq
In the first case $x_*$ is said to be positively stable, while in the second case it is said to be negatively stable.
\begin{Rem}
If $x_*$ is a sink (resp. source) point of the linearized system, namely $\nabla^2u(x_*)$ has only negative (resp. positive) eigenvalues, then $x_*$ is positively (resp. negatively) stable.
\end{Rem} 
\begin{Rem}\label{rem.stable}
If $x_*$ is a strict local extremum, then by Lyapunov's stability (see, {\em e.g.}, \cite{HSD} Theorem p.~194) $x_*$ is stable in the sense of definition \refe{stable}.. Conversely, if $x_*$ is stable, then it is asymptotically stable, namely each trajectory $X(t,x)$ for $x\in Q_*$, converges as $t\to\infty$ (resp. $t\to-\infty$) to the isolated critical point $x_*$ (see, {\em e.g.}, \cite{HSD} Proposition~p.~206). Therefore, since the function $u\big(X(\cdot,x)\big)$ is non-decreasing, $x_*$ is a local maximum (resp. minimum) of $u$.
\end{Rem}
In connection with Remark~\ref{rem.stable}, the following strong maximum principle holds:
\begin{Thm}\label{thm.stable}
Let $u\in C^2(\RR^d)$, for $d\geq 2$. Let $x_*$ be a positively (resp. negatively) stable point for \refe{X}., satisfying \refe{isocri}. with $\De u(x_*)=0$. Assume that there exists a constant $C_*>0$ such that
\beq\label{C*}
\forall\,t\geq 0\mbox{ (resp. $\leq 0$)},\ \forall\,x\in Q_*,\quad\left|\,\int_0^t \De u\big(X(s,x)\big)\,ds\,\right|\leq C_*.
\eeq
Then, $u$ is constant in a neighborhood of $x_*$.
\end{Thm}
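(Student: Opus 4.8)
The plan is to read the boundedness hypothesis \refe{C*}. geometrically through Liouville's formula. Along the gradient flow \refe{X}. the divergence of the velocity field $\nabla u$ is precisely $\De u$, so $\int_0^t\De u\big(X(s,x)\big)\,ds$ is nothing but the logarithm of the Jacobian of the time-$t$ flow map $X(t,\cdot)$; thus \refe{C*}. asserts that this flow map distorts Lebesgue measure on $Q_*$ by a factor that stays bounded above and below, uniformly in $t\ge 0$. On the other hand, \refR{rem.stable}. tells us that $x_*$ is a local maximum of $u$ and that the flow drives the whole of $Q_*$ onto the single point $x_*$. The contradiction between ``the flow does not contract volume'' and ``the flow collapses $Q_*$ to a point'' is what I would use to force $u$ to be constant. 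I treat the positively stable case; the negatively stable case is identical after reversing time.

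First I would record the Liouville computation. Since $u\in C^2$, the flow $X$ is $C^1$ in $x$ and $M(t,x):=\nabla_x X(t,x)$ solves the variational equation $\partial_t M=\nabla^2u\big(X(t,x)\big)\,M$ with $M(0,x)=I$. Abel's identity then gives $\det M(t,x)=\exp\!\big(\int_0^t\tr\nabla^2u(X(s,x))\,ds\big)=\exp\!\big(\int_0^t\De u(X(s,x))\,ds\big)>0$, so each $X(t,\cdot)$ is a $C^1$-diffeomorphism of $Q_*$ onto its image. By \refe{C*}. this yields the two-sided bound $e^{-C_*}\le\det\nabla_x X(t,x)\le e^{C_*}$ for all $t\ge 0$ and $x\in Q_*$. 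Hence, by the change of variables formula, $|X(t,A)|=\int_A\det\nabla_x X(t,x)\,dx\ge e^{-C_*}\,|A|$ for every measurable $A\subseteq Q_*$ and every $t\ge 0$, while by \refe{stable}. the image $X(t,A)$ stays inside the compact set $K_*$.

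Next I would exploit the attraction. By \refR{rem.stable}. the point $x_*$ is asymptotically stable and $Q_*$ lies in its basin of attraction; upgrading the pointwise convergence to uniform convergence on compact subsets of the basin, I get for every compact $A\subseteq Q_*$ that $\sup_{x\in A}|X(t,x)-x_*|\to 0$ as $t\to\infty$, whence $|X(t,A)|\to 0$. Comparing with the volume lower bound of the previous step gives $e^{-C_*}|A|\le\lim_{t\to\infty}|X(t,A)|=0$, so $|A|=0$ for every compact $A\subseteq Q_*$. To conclude I normalize $u(x_*)=0$, so that $u\le 0$ on $Q_*$ by \refR{rem.stable}., and I apply this nullity to compact subsets of the closed sets $\{x\in Q_*:u(x)\le-\delta\}$: by inner regularity of Lebesgue measure each $\{u\le-\delta\}\cap Q_*$ is null, and letting $\delta\downarrow 0$ yields $u\ge 0$ almost everywhere on $Q_*$. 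Combined with $u\le 0$ this forces $u=0$ a.e., hence $u\equiv 0$ on $Q_*$ by continuity, i.e. $u$ is constant near $x_*$.

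I expect the main obstacle to be the uniform attraction step: the dynamical-systems input in \refR{rem.stable}. is a priori only pointwise, and I must verify that trajectories issued from a fixed compact subset of $Q_*$ all enter a prescribed small ball around $x_*$ after one common time, so that $|X(t,A)|$ genuinely tends to $0$. This is where Lyapunov stability, the confinement in $K_*$ provided by \refe{stable}., and the fact that $x_*$ is the only critical point of $K_*$ (by \refe{isocri}.) must be combined, typically via a finite-covering argument (see \cite{HSD}); it is the one delicate point, the Liouville identity and the change of variables being essentially algebraic. I also note that the hypothesis $\De u(x_*)=0$ is in fact compatible with \refe{C*}. automatically and is not actually needed in the argument above.
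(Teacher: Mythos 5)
Your core mechanism is sound and genuinely different from the paper's. The paper sets $w_n(x)=\int_0^n\De u\big(X(s,x)\big)\,ds$, derives $\div\left(e^{w_n}\nabla u\right)=e^{w_n}\De u\big(X(n,\cdot)\big)$, extracts a weak-$*$ limit $\si$ of $e^{w_n}$ (this is where $\De u(x_*)=0$ enters, to make the right-hand side vanish in the limit), and then applies the Gilbarg--Trudinger strong maximum principle to the weak solution $u$ of $\div\left(\si\nabla u\right)=0$ with $\si,\si^{-1}\in L^\infty(Q_*)$. Your Liouville identity $\det\nabla_xX(t,x)=\exp\big(\int_0^t\De u(X(s,x))\,ds\big)$ reads that very same exponential as a Jacobian, and trades the PDE machinery (weak-$*$ compactness, maximum principle) for the change-of-variables formula plus the collapse of $Q_*$ onto $x_*$; this is more elementary, at the price of losing the intermediate realizability statement $\si,\si^{-1}\in L^\infty(Q_*)$, which is the form in which the theorem is actually exploited elsewhere (e.g.\ in Proposition~\ref{pro.deg}). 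The uniform-attraction step you flag is genuine but fine: $x_*$ is a \emph{strict} local maximum of $u$ on $Q_*$ (if $u(y)=u(x_*)$ with $y\in Q_*\setminus\{x_*\}$, then $u\big(X(\cdot,y)\big)$ is non-decreasing with limit $u(x_*)$, hence constant, hence $\nabla u(y)=0$, impossible since $K_*$ contains no critical point besides $x_*$), so $u(x_*)-u$ is a strict Lyapunov function and the finite-covering argument goes through. Your observation that $\De u(x_*)=0$ is automatic is also correct: taking $x=x_*$ in \refe{C*}. gives $|t\,\De u(x_*)|\leq C_*$ for all $t\geq 0$.

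The one point you must state openly rather than paper over is the ending. Your argument proves $|A|=0$ for \emph{every} compact $A\subset Q_*$, hence $|Q_*|=0$, which is absurd since $Q_*$ is a neighborhood of $x_*$. This is not an error of yours: the hypotheses of \refT{thm.stable}. are mutually inconsistent (the conclusion that $u$ is constant near $x_*$ contradicts hypothesis \refe{isocri}.), and the theorem functions in the paper as a non-existence statement, exactly as in Proposition~\ref{pro.deg}~$i)$. But your last paragraph, which routes the nullity through the sublevel sets $\{u\leq-\de\}$ to recover $u\equiv 0$, is a derivation built on a statement you have just shown to be false; it is valid only \emph{ex falso} and disguises what has actually been established. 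The honest ending is: take $A=\overline{B}(x_*,r)\subset Q_*$ with $r>0$; then $0<e^{-C_*}|A|\leq\lim_{t\to\infty}|X(t,A)|=0$, a contradiction, so the hypotheses can never hold simultaneously and the stated implication holds. If one wants to reach the literal conclusion through consistent-looking intermediate statements, one has to pass, as the paper does, through the realizability of $\nabla u$ with $\si,\si^{-1}\in L^\infty(Q_*)$ and only then invoke the maximum principle.
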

\begin{Rem}
The boundedness condition \refe{C*}. is quite similar to the condition which permits to obtain the isotropic realizability in the torus in the absence of critical points. This approach does not imply any constraint on the dimension.
\end{Rem}
\noindent
{\bf Proof of Theorem~\ref{thm.stable}.}
Assume for example that $x_*$ is positively stable for~\refe{X}..
Define for any positive integer $n$, the function $w_n$ by
\beq\label{wn}
w_n(x):=\int_0^n \De u\big(X(s,x)\big)\,ds,\quad\mbox{for }x\in Q_*.
\eeq
Using the semigroup property $X\big(s,X(t,x)\big)=X(s+t,x)$, the function $w_n$ satisfies for any $t\geq 0$ and any $x\in Q_*$,
\beq\label{wnX}
\ba{ll}
\dis w_n\big(X(t,x)\big)-w_n(x) & \dis =\int_t^{t+n}\De u\big(X(s,x)\big)\,ds-\int_0^n \De u\big(X(s,x)\big)\,ds,
\\ \ecart
& \dis =-\int_0^t \De u\big(X(s,x)\big)\,ds+\int_n^{t+n}\De u\big(X(s,x)\big)\,ds
\\ \ecart
& \dis =-\int_0^t \De u\big(X(s,x)\big)\,ds+\int_0^t\De u\big(X(s,X(n,x))\big)\,ds.
\ea
\eeq
Taking the derivative of \refe{wnX}. with respect to $t$ at the origin, this yields
\beq\label{unw}
\nabla w_n(x)\cdot\nabla u(x)+ \De u(x)=\De u\big(X(n,x)\big),\quad\forall\,x\in Q_*.
\eeq
Hence, we get that
\beq\label{ewnu}
\div\left(e^{w_n}\nabla u\right)=e^{w_n}(\nabla w_n\cdot\nabla u+ \De u)=e^{w_n}\De u\big(X(n,\cdot)\big)\quad\mbox{in }Q_*.
\eeq
By condition \refe{C*}. the sequence $e^{w_n}$ is bounded in $L^\infty(Q_*)$, thus converges weakly-$*$ up to a subsequence to some $\si$ in $L^\infty(Q_*)$.
\par
On the other hand, by virtue of Remark~\ref{rem.stable}, for any $x\in Q_*$ the sequence $X(n,x)\in K_*$ converges to $x_*$ as the unique critical point of~$K_*$. This combined with $\De u(x_*)=0$ and the boundedness of $e^{w_n}$, implies that the sequence $e^{w_n}\De u\big(X(n,\cdot)\big)$ converges to $0$ everywhere in~$Q_*$. By \refe{C*}. it is also bounded in $L^\infty(Q_*)$. Now, integrating by parts~\refe{ewnu}., then using the weak-$*$ convergence of $e^{w_n}$ and Lebesgue's dominated convergence theorem, we get that for any $\ph\in C^\infty_c(Q_*)$,
\beq
\int_{Q_*}e^{w_n}\nabla u\cdot\nabla\ph\,dx=-\int_{Q_*}e^{w_n}\De u\big(X(n,\cdot)\big)\,\ph\,dx\limi\int_{Q_*}\si\nabla u\cdot\nabla\ph\,dx=0,
\eeq
which yields that $\si\nabla u$ is divergence free in $Q_*$. Therefore, $\nabla u$ is realizable with the non-negative conductivity $\si\in L^\infty(Q_*)$. Moreover, condition \refe{C*}. shows that $\si$ is also bounded from below by $e^{-C_*}$, so that $\si^{-1}\in L^\infty(Q_*)$. Hence, $u$ is a weak solution of $\div\left(\si\nabla u\right)=0$ in $Q_*$, where the positive conductivity $\si$ satisfies $\si,\si^{-1}\in L^\infty(Q_*)$. Moreover, by Remark~\ref{rem.stable} the point $x_*$ is a local maximum of $u$. Therefore, by the strong maximum principle for weak solutions to second-order elliptic pde's (see, {\em e.g.}, \cite{GiTr} Theorem~8.19), the function $u$ is constant in a neighborhood of $x_*$.  \cqfd
\section{An example with a non-hyperbolic point}
Consider a point $x_*$ satisfying \refe{isocri}., which is not hyperbolic in the sense that $\nabla u^2(x_*)$ has a zero determinant, and which is not stable for \refe{X}.. Generally speaking, a neighborhood of $x_*$ can be divided in several regions such that for which of them the gradient system \refe{X}. mimics either a saddle point, a sink or a source. The coexistence of these different behaviors in the phase portrait prevents $\nabla u$ from being isotropically realizable.
\par
In the sequel a point of $\RR^2$ is denoted by the coordinates $(x,y)$.
To illustrate this degenerate case consider the potential $u:(-1,1)^2\to\RR$ defined by
\beq\label{udeg}
u(x,y):={1\over 3}\left(y^3-x^3\right),\quad\mbox{for }(x,y)\in(-1,1)^2.
\eeq
The point $(0,0)$ is the unique critical point of $\nabla u$, and $\nabla^2 u(0,0)$ is the zero $(2\times 2)$ matrix. Moreover, $(0,0)$ is not a local extremum of $u$, thus it is not stable in the sense of definition~\refe{stable}.. The phase portrait representation of \reff{fig2}. shows both the stable point behavior, the sink behavior and the source behavior. Each of them appears in one of the four quadrants of the plane $\RR^2$.
\par
For the gradient of $u$ defined by \refe{udeg}., we have the following non-realizability result:
\begin{Pro}\label{pro.deg}
\hfill
\begin{itemize}
\item[$i)$] In the open set $\Om=(0,1)\times(-1,0)$ (resp. $(-1,0)\times(0,1)$), the point $(0,0)$ is positively (resp. negatively) stable.
However, $\nabla u$ is not isotropically realizable with any positive conductivity $\si\in C^1(\Om)\cap L^\infty(\Om)$.
\item[$ii)$] In the open set $\Om=(0,1)^2$ or $(-1,0)^2$, the point $(0,0)$ is not stable. Moreover, $\nabla u$ is isotropically realizable with some positive conductivity $\si_0\in C^1(\Om)\cap L^\infty(\Om)$. However, $\nabla u$ is not realizable with any positive conductivity $\si\in C^1(\Om)\cap L^\infty(\Om)$ such that $\si^{-1}\in L^\infty(\Om)$.
\end{itemize}
\end{Pro}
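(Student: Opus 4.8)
The plan is to exploit the fact that for $u=\frac13\left(y^3-x^3\right)$ the gradient system decouples completely. Since $\nabla u=(-x^2,y^2)$ and $\De u=2(y-x)$, the flow \refe{X}. reads $x'=-x^2$, $y'=y^2$, so that on its interval of existence
\beq
X(t,x_0,y_0)=\left({x_0\over 1+t\,x_0}\,,\;{y_0\over 1-t\,y_0}\right).
\eeq
First I would read the qualitative behaviour off this formula. In the quadrant $(0,1)\times(-1,0)$ both components stay in the quadrant and tend to $0$ as $t\to+\infty$, which gives the positive stability of $(0,0)$ relative to $\Om$; the quadrant $(-1,0)\times(0,1)$ is symmetric with $t\to-\infty$, giving negative stability. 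In the quadrant $(0,1)^2$, instead, the component $y$ blows up in finite positive time and $x$ in finite negative time, so every trajectory leaves $\Om$ in both time directions and $(0,0)$ is not stable. The second ingredient common to both parts is the explicit primitive of $\De u$ along the flow,
\beq
\int_0^t\De u\big(X(s,x_0,y_0)\big)\,ds=-2\,\ln\big(|1-t\,y_0|\,|1+t\,x_0|\big).
\eeq

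For part $i)$ I would argue by contradiction. If $\si\in C^1(\Om)\cap L^\infty(\Om)$ realizes $\nabla u$, then $w:=\ln\si\in C^1(\Om)$ solves $\nabla w\cdot\nabla u=-\De u$, so along each trajectory $w\big(X(t,x_0,y_0)\big)=w(x_0,y_0)+2\,\ln\big(|1-t\,y_0|\,|1+t\,x_0|\big)$. In $(0,1)\times(-1,0)$ this tends to $+\infty$ as $t\to+\infty$, while $X(t,x_0,y_0)$ stays in $\Om$ and converges to $(0,0)$; hence the continuous function $\si=e^w$ is unbounded on $\Om$, contradicting $\si\in L^\infty(\Om)$. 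This is precisely the local failure of the boundedness condition \refe{bound}.\ for a stable point.

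For part $ii)$ the realizability is obtained by exhibiting a conductivity: I would check directly that $\si_0:=x^2y^2/(x+y)^4$ satisfies $\div\left(\si_0\nabla u\right)=0$ in $(0,1)^2$, and since $(x+y)^4\geq 16\,x^2y^2$ it obeys $0<\si_0\leq\frac1{16}$, so $\si_0\in C^1(\Om)\cap L^\infty(\Om)$, even though $\si_0^{-1}$ blows up at the edges. For the non-realizability with $\si,\si^{-1}\in L^\infty(\Om)$, the key observation is that $s:=1/x+1/y$ is a first integral of the flow (indeed $x'/x+y'/y=y-x=\tfrac12\De u$, so also $w+2\ln(xy)$ is conserved), and each level set $\{s=c\}\cap\Om$ is a single connected arc. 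Consequently every $C^1$ solution of $\nabla w\cdot\nabla u=-\De u$ must be of the form $w=-2\ln(xy)+\ph(s)$ for some $\ph\in C^1$. I would then evaluate $-2\ln(xy)=2\ln\big((1/x)(1/y)\big)$ on a fixed level set: writing $p=1/x,\,q=1/y$ with $p+q=s$ and $p,q>1$, the product $pq$ ranges over $[s-1,\,s^2/4]$, so on that one trajectory $w$ varies between $\approx 2\ln s+\ph(s)$ near the edges and $\approx 4\ln s+\ph(s)$ near the diagonal point closest to the corner. Boundedness of $w$ from above forces $\ph(s)\leq C-4\ln s$ and boundedness from below forces $\ph(s)\geq -C'-2\ln s$, which are incompatible as $s\to+\infty$.

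The main obstacle I anticipate is exactly this last step: the realizability obstruction for part $ii)$ is invisible along any single trajectory, where $w$ stays finite up to the edges, and one must instead compare the growth of $w$ across the whole family of trajectories as they accumulate at the corner $(0,0)$. Identifying the conserved quantity $s=1/x+1/y$ and reducing the transport equation to the one-parameter form $w=-2\ln(xy)+\ph(s)$ is what converts this global comparison into two competing one-sided bounds on $\ph$; the remaining estimates on $pq$ over a level set are then routine.
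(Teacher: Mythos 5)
Your proposal is correct. Part $i)$ and the positive half of part $ii)$ coincide with the paper's proof: same explicit flow $X(t,x,y)=\big(x/(1+tx),y/(1-ty)\big)$, same integral $\int_0^t\De u\big(X(s,\cdot)\big)\,ds=-2\ln\big(|1+tx|\,|1-ty|\big)$, and the same conductivity $\si_0$ (the paper normalizes it as $16x^2y^2/(x+y)^4$, obtained from $\exp\big(\int_0^{\tau}\De u(X(s,\cdot))\,ds\big)$ with $\tau=(x-y)/(2xy)$ rather than by direct verification). Where you genuinely diverge is the negative half of part $ii)$. The paper evaluates the transport identity at the single time $t=\tau(x,y)$ at which the trajectory meets the diagonal, getting $\si(x,y)/\si\big(X(\tau,x,y)\big)=16x^2y^2/(x+y)^4\to 0$ as $x\to 0$ with $y$ fixed, contradicting $\si,\si^{-1}\in L^\infty(\Om)$. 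You instead identify the first integral $s=1/x+1/y$, write the general solution as $w=-2\ln(xy)+\ph(s)$, and play the supremum of $w$ on a level set (at $p=q=s/2$, where $pq=s^2/4$) against its infimum (as $p\to 1$, where $pq\to s-1$), getting incompatible one\mbox{-}sided bounds on $\ph$. The two arguments rest on the same mechanism --- the oscillation of $\ln\si$ along a single trajectory is $\big|\int\De u\,ds\big|$ between its endpoints, and this oscillation is unbounded over the family of trajectories accumulating at the corner --- so your closing remark that the obstruction is ``invisible along any single trajectory'' slightly mischaracterizes both proofs; the paper's version is precisely a two-point comparison on one trajectory. What your packaging buys is a systematic description of \emph{all} admissible $w$ via the first integral, which makes the sup/inf competition transparent; what the paper's buys is brevity, since it needs only the one distinguished point $X(\tau,x,y)$ already available from the saddle-point machinery. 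Both are complete; the only cosmetic discrepancy is the harmless factor $16$ in $\si_0$.
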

\begin{Rem}
In each open quadrant $\Om$ of $\RR^2$, the field $\nabla u$ is isotropically realizable with the smooth conductivity
\beq
\si(x,y):={1\over x^2 y^2}\geq 1,\quad\mbox{for }(x,y)\in\Om,
\eeq
which is bounded from below by $1$. But the function $\si$ does not belong to $L^\infty(\Om)$.
Moreover, Proposition~\ref{pro.deg} $i)$ implies that there is no conductivity having a better bound from above in $\Om=(0,1)\times(-1,0)$ or $(-1,0)\times(0,1)$.
\end{Rem}
\begin{figure}
\centering
\vskip -3.cm
\includegraphics[scale=.7]{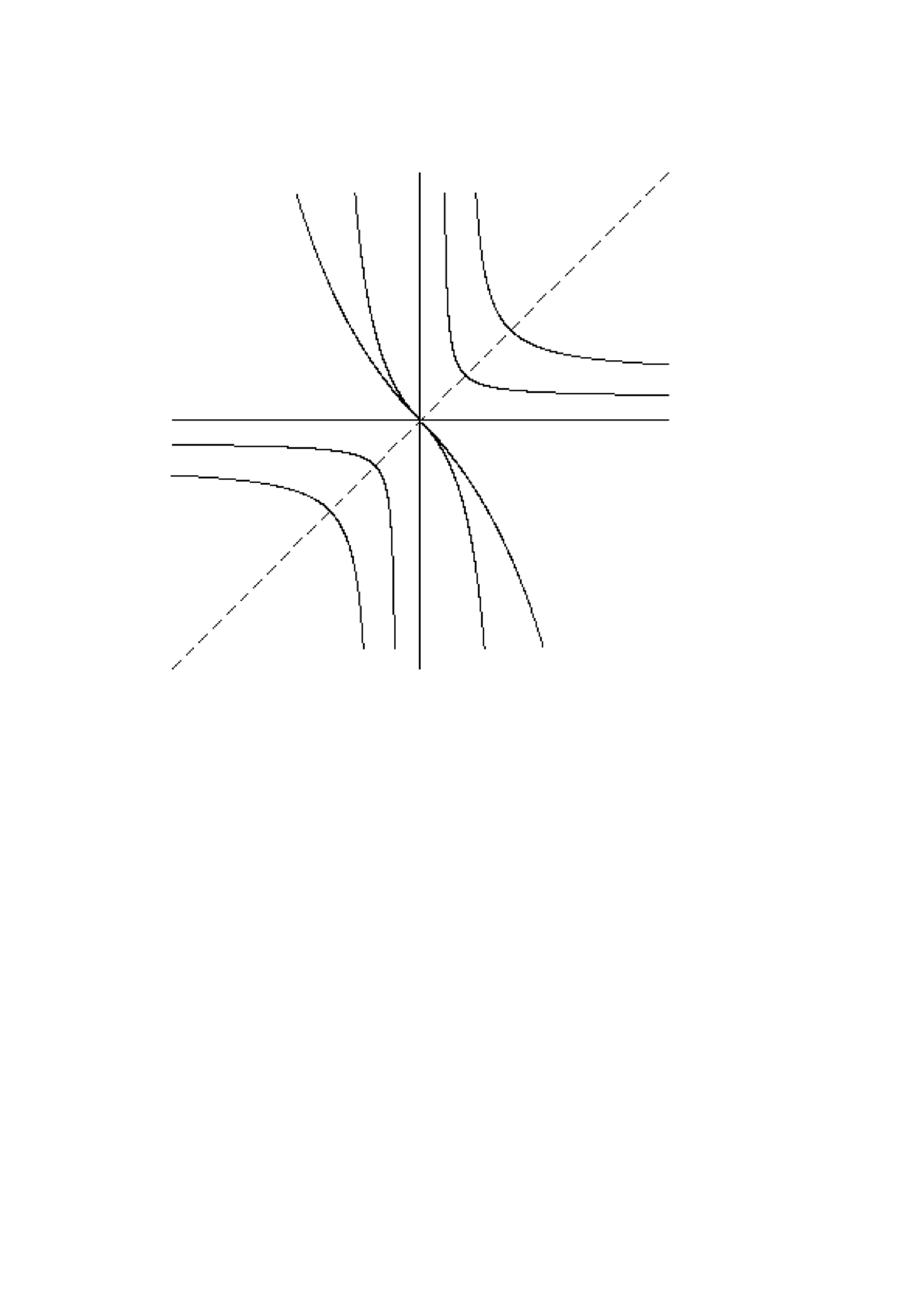}
\vskip -10.25cm
\caption{\it Phase portrait for $u={1\over 3}\left(y^3-x^3\right)$ around the point $(0,0)$, with the saddle point behavior in $\{xy>0\}$, the sink behavior in $\{x>0,y<0\}$, the source behavior in $\{x<0,y>0\}$, and the equipotential $\{u=0\}$ in dash intersecting the trajectories.}
\label{fig2}
\end{figure}
\noindent
{\bf Proof of Proposition~\ref{pro.deg}.}
\par\ms\noindent
{\it Proof of $i)$.} Let us study the case $\Om=(0,1)\times(-1,0)$. A simple computation shows that he solution of \refe{X}. is given by
\beq\label{Xdeg}
X(t,x,y)=\left({x\over 1+tx},{y\over 1-ty}\right)\in\Om,\quad\mbox{for any }(x,y)\in\Om\mbox{ and }t\geq 0,
\eeq
which converges to $(0,0)$ as $t\to\infty$. Therefore, the point $(0,0)$ is positively stable.
\par
Now, assume that there exists a positive function $\si\in C^1(\Om)\cap L^\infty(\Om)$ such that
\beq
\div\left(\si\nabla u\right)=0\;\;\mbox{in }\Om\quad\mbox{or equivalently}\quad\nabla(\ln\si)\cdot\nabla u+\De u=0\;\;\mbox{in }\Om.
\eeq
Fix $(x,y)\in\Om$. Then, integrating over $[0,t]$ the equality
\beq
{\partial\over\partial s}\left[\ln\si\big(X(s,x,y)\big)\right]=\big(\nabla(\ln\si)\cdot\nabla u\big)\big(X(s,x,y)\big)=-\De u\big(X(s,x,y)\big),
\quad\mbox{for }s\geq 0,
\eeq
it follows that
\beq\label{siDeu}
{\si(x,y)\over\si\big(X(t,x,y)\big)}=\exp\left(\int_0^t\De u\big(X(s,x,y)\big)\,ds\right),\quad\forall\,t\geq 0.
\eeq
Moreover, an easy computation using \refe{Xdeg}. yields
\beq
\int_0^t\De u\big(X(s,x,y)\big)\,ds=-2\,\ln\big[(1+tx)\,(1-ty)\big].
\eeq
The two previous formulas combined with the boundedness of $\si$ in $\Om$, imply that
\beq
0<\si(x,y)\leq{c\over(1+tx)^2\,(1-ty)^2}\;\mathop{\longrightarrow}_{t\to\infty}\;0,
\eeq
which gives a contradiction.
\par\bs\noindent
{\it Proof of $ii)$.} Let us study the case $\Om=(0,1)^2$. Following the approach of the saddle point case, for any $x,y>0$ the time $\tau=\tau(x,y)$ such that $u\big(X(\tau,x,y)\big)=0$ is given by
\beq
\tau(x,y)={x-y\over 2xy}\in\left(-{1\over 2x},{1\over 2y}\right).
\eeq
Then, the positive function $\si_0$ defined by
\beq\label{si0}
\si_0(x,y):=\exp\left(\int_0^{\tau(x,y)} \De u\big(X(s,x,y)\big)\,ds\right)={16\,x^2y^2\over(x+y)^4},\quad\mbox{for }(x,y)\in\Om,
\eeq
satisfies $\div\left(\si_0\nabla u\right)=0$ in $\Om$. Moreover, we have $\si_0\in C^1(\Om)$ and $0<\si_0\leq 16$ in $\Om$.
\par
One the other hand, assume that there exists a positive function $\si\in C^1(\Om)\cap L^\infty(\Om)$ with $\si^{-1}\in L^\infty(\Om)$, such that $\div\left(\si\nabla u\right)=0$ in $\Om$. Then, the formulas \refe{si0}. and \refe{siDeu}. with $t=\tau(x,y)$, imply that for any $(x,y)\in\Om$,
\beq\label{sisi0}
{\si(x,y)\over\si\big(X(\tau,x,y)\big)}={16\,x^2 y^2\over(x+y)^4}\,,
\eeq
where by \refe{Xdeg}. $X(\tau,x,y)={2xy\over x+y}\left(1,1\right)\in\Om$.
Therefore, due to the condition satisfied by $\si$ the left-hand side of \refe{sisi0}. is bounded from above by a positive constant on $\Om$, while the right-hand side converges to $0$ as $x\to 0$ for a fixed $y>0$. This leads to a contradiction, and concludes the proof. \cqfd
\section{Isotropic realizability in the torus}
\subsection{A conjecture and a general result}
Let $Y$ be the unit cube of $\RR^d$.
In view of the previous results and Theorem~2.17 in \cite{BMT} we may state the following conjecture on the isotropic realizability in the torus:
\begin{Conj}\label{conj1}
Let $u\in C^3(\RR^d)$, for $d\geq 2$, be a function such that $\nabla u$ is $Y$-periodic. Assume that the critical points of $u$ are isolated, and that the conditions \refe{C1Rd}. and \refe{C2Rd}. below hold. Then, $\nabla u$ is isotropically realizable in the torus with a positive conductivity $\si$ such that $\si,\si^{-1}\in L^\infty_\sharp(Y)$.
\end{Conj}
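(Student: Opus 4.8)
The plan is to realize $\nabla u$ with a conductivity of the form $\si=e^w$, where $w$ is a bounded $Y$-periodic solution of the linear transport equation
\beq
\nabla u\cdot\nabla w+\De u=0\quad\mbox{in }\RR^d,
\eeq
whose characteristic curves are exactly the trajectories of the gradient system \refe{X}.. This equation is equivalent to $\div(e^w\nabla u)=0$, and along each trajectory it integrates to ${d\over dt}\big[w(X(t,x))\big]=-\De u(X(t,x))$, so that $w$ is propagated by the running integral \refe{intLapuX}.. The whole problem is therefore to produce a single-valued, periodic, essentially bounded solution $w$ across the isolated critical points, where the characteristic field $\nabla u$ vanishes and the naive transport formula may blow up; the two boundedness hypotheses \refe{C1Rd}. and \refe{C2Rd}. are precisely what should control this blow-up.

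First I would use the gradient structure to rule out the only intrinsic obstruction to solving the transport equation along characteristics. Since ${d\over dt}\,u(X(t,x))=|\nabla u(X(t,x))|^2>0$ off the critical set, the potential $u$ is a strict Lyapunov function of \refe{X}. and strictly increases along every nonconstant trajectory; hence \refe{X}. has no nonconstant periodic orbit, and in the compact torus the LaSalle invariance principle confines the $\alpha$- and $\omega$-limit set of each trajectory to the critical set, which by \refe{isocri}. and periodicity is finite modulo the lattice. Consequently no characteristic closes up with a nonzero circulation of $\De u$, so the transport equation can in principle be integrated along trajectories, the only freedom left being an additive normalization constant attached to each connected piece of characteristics.

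Next I would patch the local models of the earlier sections. Away from the critical points I take, following \cite{BMT} and the construction of \refT{thm.saddle}., the function $w(x)=\int_0^{\tau(x)}\De u(X(s,x))\,ds$, where $\tau(x)$ is the hitting time of a fixed reference equipotential; the global condition \refe{C1Rd}. keeps $w$ in $L^\infty$ and $\si=e^w$ between two positive constants there. At each critical point I classify its type and invoke the earlier results. A stable (sink or source) point is impossible: by \refT{thm.stable}. and the local boundedness contained in \refe{C2Rd}. it would force $u$ to be constant near the point, contradicting \refe{isocri}.. A non-hyperbolic unstable point is likewise impossible, for by \refP{pro.deg}. the coexistence of saddle, sink and source sectors at such a point obstructs bounded realizability, so \refe{C2Rd}. must fail there. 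Hence every critical point is a saddle, and \refT{thm.saddle}. delivers, on a neighborhood $Q_*$, the very same conductivity $\si=e^w$ with $\si,\si^{-1}\in L^\infty(Q_*)$ and $\div(\si\nabla u)=0$ across the invariant manifolds $\Ga^s\cup\Ga^u$. Since the global and the local constructions are given by the identical flow-dependent formula $\si=e^w$, they coincide on overlaps; a finite cover of the torus by such saddle neighborhoods together with the regular region then yields a globally defined measurable $\si$ with $\si,\si^{-1}\in L^\infty_\sharp(Y)$, its periodicity coming from the equivariance of \refe{X}. under integer translations (under which $\De u$ is invariant), once the additive normalization of $w$ is chosen consistently on a fundamental domain.

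The hard part --- and the reason the statement is only conjectural --- is the global consistency of this normalization when the trajectories of \refe{X}. are \emph{unbounded}, that is when $\int_Y\nabla u\neq 0$ and $u$ is not itself periodic. In that regime the stable and unstable manifolds of the saddles form a separatrix web whose complementary cells, lifted to the universal cover, need be neither finite in number nor simply connected; a trajectory may wander through arbitrarily many periods before meeting the reference equipotential, so the hitting time $\tau(x)$ may be unbounded or even undefined, and the constant propagated across a separatrix from one cell to the next may be multivalued. Ruling out such a monodromy requires a genuinely global control of the heteroclinic structure --- a proof that the running integrals of $\De u$ along the infinitely many homoclinic and heteroclinic connections are mutually compatible --- which is exactly what the boundedness assumption of \refT{thm.globsing}. avoids by trapping every trajectory in a compact set. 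I expect that either establishing this compatibility in general, or exhibiting a periodic gradient field with isolated critical points for which \refe{C1Rd}.--\refe{C2Rd}. hold yet $\si^{-1}\notin L^\infty_\sharp(Y)$, is the crux of the conjecture; for bounded trajectories the web is finite and the argument above goes through, recovering \refT{thm.globsing}..
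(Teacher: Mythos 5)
The statement you set out to prove is precisely the paper's Conjecture~\ref{conj1}, which the author explicitly leaves open (``For the moment we have not succeeded to prove this result''); the paper only establishes the weaker Theorem~\ref{thm.globsing}, in which one additionally assumes that for almost every $x$ a trajectory $X\big([0,\infty),x\big)$ or $X\big((-\infty,0],x\big)$ is bounded. Your proposal does not close this gap either, and you say so yourself in your final paragraph: the unbounded-trajectory regime, where the hitting time $\tau(x)$ may be undefined and the additive normalization of $w$ across the separatrix web may be inconsistent (a monodromy problem on the universal cover), is left entirely unresolved. So, judged as a proof of the statement, the proposal is incomplete; what it contains is a sketch of the known partial result together with an honest description of the obstruction --- which happens to agree with the paper's own assessment of why this is a conjecture.

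Moreover, even the bounded-trajectory part that you do argue differs from, and is shakier than, the paper's proof of Theorem~\ref{thm.globsing}. The paper never classifies the critical points and never patches local conductivities: it works with the truncated integrals $w_n(x)=\int_0^n\De u\big(X(s,x)\big)\,ds$ of \refe{wn}., uses the boundedness of trajectories, the fact that limit points of bounded trajectories are critical points, and \refe{C1Rd}. to show that $e^{w_n}\De u\big(X(n,\cdot)\big)\to 0$ almost everywhere, then passes to a weak-$*$ limit in $L^\infty$ (available by \refe{C2Rd}.) in the identity \refe{ewnu}. to get $\div\left(\si_0\nabla u\right)=0$ in $\RR^d$, and finally periodizes $\si_0$ by averaging integer translates. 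Your alternative route has three concrete flaws: first, ruling out non-hyperbolic critical points ``by Proposition~\ref{pro.deg}'' is unjustified, since that proposition treats the single example $u=\frac{1}{3}\left(y^3-x^3\right)$ and is in no way a general obstruction theorem, so you cannot conclude that every critical point is a saddle; second, invoking Theorem~\ref{thm.saddle} requires verifying its hypotheses \refe{taux}. and \refe{wC1}. at each saddle, which you do not do; third, the patching claim that the local and global constructions ``coincide on overlaps'' is false as stated, because the local $w$ is built from the hitting time of the equipotential $\{u=u(x_*)\}$ while the global $w$ uses a fixed reference equipotential, and the two differ by a function that is constant along trajectories but not constant in space, so single-valuedness of the glued conductivity is exactly the nontrivial point. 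The compactness-and-averaging argument of the paper is designed precisely to bypass all three of these difficulties, which is what makes it go through under the bounded-trajectory hypothesis.
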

Conjecture~\ref{conj1} is a natural extension, allowing the existence of isolated critical points, of the realizability in the torus which was derived in \cite{BMT} in the absence of critical point and under the same estimate~\refe{C2Rd}.. For the moment we have not succeeded to prove this result.
However, the next result gives a partial answer to Conjecture~\ref{conj1} under the extra assumption that the trajectories are bounded:
\begin{Thm}\label{thm.globsing}
Let $u\in C^3(\RR^d)$, for $d\geq 2$, be a potential the gradient of which is $Y$-periodic, and such that for almost every $x\in\RR^d$, the trajectory $X\big([0,\infty),x\big)$ of \refe{X}. -- defined as the range of the mapping $t\geq 0\mapsto X(t,x)$ -- or the trajectory $X\big((-\infty,0],x\big)$ is bounded in $\RR^d$.
\begin{itemize}
\item[$i)$] Assume that
\beq\label{C1Rd}
\forall\,x\in\RR^d,\quad\nabla u(x)=0\ \Rightarrow\ \De u(x)=0,
\eeq
 and that there exists a constant $C>0$ such that
\beq\label{C2Rd}
\forall\,t\geq 0,\ \forall\,x\in\RR^d,\quad\left|\,\int_0^t \De u\big(X(s,x)\big)\,ds\,\right|\leq C.
\eeq
Then, $\nabla u$ is isotropically realizable in the torus with a positive conductivity $\si$ such that $\si,\si^{-1}\in L^\infty_\sharp(Y)$.
\item[$ii)$] Conversely, assume that $\nabla u$ is isotropically realizable in the torus with a positive conductivity $\si\in C^1_\sharp(Y)$. Then, conditions \refe{C1Rd}. and \refe{C2Rd}. hold true.
\end{itemize}
\end{Thm}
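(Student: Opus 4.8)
The plan is to adapt the approximation scheme of the proof of Theorem~\ref{thm.stable} to the periodic setting, the new feature being that a single trajectory need no longer reach the equipotential $\{u=0\}$, so the conductivity must be produced as a weak-$*$ limit rather than through the explicit flow-to-$\{u=0\}$ formula of \cite{BMT}. For $x\in\RR^d$ set $w_n(x):=\int_0^n\De u\big(X(s,x)\big)\,ds$. Since $\nabla u$ is $Y$-periodic, uniqueness for \refe{X}. gives $X(t,x+k)=X(t,x)+k$ for every $k\in\ZZ^d$, and $\De u=\div(\nabla u)$ is itself $Y$-periodic; hence each $w_n$ is $Y$-periodic. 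Exactly as in \refe{wnX}.--\refe{ewnu}. the semigroup property yields $\div\big(e^{w_n}\nabla u\big)=e^{w_n}\,\De u\big(X(n,\cdot)\big)$ in $\RR^d$. By the bound \refe{C2Rd}. we have $e^{-C}\le e^{w_n}\le e^{C}$, so a subsequence of $e^{w_n}$ converges weakly-$*$ in $L^\infty_\sharp(Y)$ to some $\si$ with $e^{-C}\le\si\le e^{C}$, whence $\si,\si^{-1}\in L^\infty_\sharp(Y)$.

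It then suffices to pass to the limit in the weak formulation $\int_Y e^{w_n}\nabla u\cdot\nabla\ph\,dx=-\int_Y e^{w_n}\,\De u\big(X(n,x)\big)\,\ph\,dx$ and to show the right-hand side tends to $0$ for every $\ph\in C^\infty_\sharp(Y)$. Since $|e^{w_n}\,\De u(X(n,\cdot))|\le e^{C}\,\sup_Y|\De u|<\infty$, by dominated convergence this reduces to proving that $\De u\big(X(n,x)\big)\to 0$ for almost every $x$. Here the boundedness hypothesis enters: if the forward trajectory $X([0,\infty),x)$ is bounded, then $u\big(X(\cdot,x)\big)$ is nondecreasing and bounded (by \refe{dtuX}.), so its $\om$-limit set is a nonempty compact connected invariant set on which $\nabla u$ vanishes, i.e. a set of critical points; consequently $\mathrm{dist}\big(X(n,x),\,\{\nabla u=0\}\big)\to 0$, and by continuity together with \refe{C1Rd}. we obtain $\De u\big(X(n,x)\big)\to 0$.

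The main obstacle is that the hypothesis only guarantees the forward \emph{or} the backward trajectory to be bounded. I would therefore split $Y$, modulo a null set, into the flow-invariant sets $A^+$ of points with bounded forward trajectory and $A^-$ of points with bounded backward trajectory. On $A^+$ the argument above gives $\De u(X(n,x))\to 0$; on $A^-$ I would run the mirror construction $\tilde w_n(x):=-\int_{-n}^0\De u\big(X(s,x)\big)\,ds$, which is again $Y$-periodic, uniformly bounded by \refe{C2Rd}., and satisfies $\div\big(e^{\tilde w_n}\nabla u\big)=e^{\tilde w_n}\,\De u\big(X(-n,\cdot)\big)$, with $\De u(X(-n,x))\to 0$ for $x\in A^-$ by the same $\al$-limit argument. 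The delicate point is to reconcile the two limits into one global periodic conductivity: on the overlap $A^+\cap A^-$ both schemes produce admissible conductivities, and one must check that the interface $\partial A^+\cap\partial A^-$, which is flow-invariant and hence a union of trajectories of \refe{X}. along which $\nabla u$ is tangent (so that $\nu\cdot\nabla u=0$ on its smooth part, $\nu$ the outer normal), carries no singular flux — exactly the mechanism used for $\Ga^s\cup\Ga^u$ in the proof of Theorem~\ref{thm.saddle}. Making this rigorous when the interface is only measurable is where I expect the real work to lie; granting it, the tangency kills the boundary integrals in the integration by parts and the two pieces patch into a single $\si\in L^\infty_\sharp(Y)$ with $\div(\si\nabla u)=0$ in the torus.

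For part $ii)$ the argument is direct. Writing $\si=e^{w}$ with $w=\ln\si\in C^1_\sharp(Y)$, the equation $\div(\si\nabla u)=0$ is equivalent to $\nabla w\cdot\nabla u+\De u=0$. Evaluating at a critical point $x$, where $\nabla u(x)=0$, gives $\si(x)\,\De u(x)=0$, hence \refe{C1Rd}. since $\si>0$. Moreover ${d\over dt}\big[w(X(t,x))\big]=\nabla w(X(t,x))\cdot\nabla u(X(t,x))=-\De u(X(t,x))$, so integrating yields $\int_0^t\De u\big(X(s,x)\big)\,ds=w(x)-w\big(X(t,x)\big)$, which is bounded by $2\,\|w\|_{L^\infty(Y)}$ uniformly in $t$ and $x$ by periodicity of $w$; this is precisely \refe{C2Rd}..
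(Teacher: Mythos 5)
Your part $ii)$ and the core of part $i)$ follow the paper's own proof almost verbatim: the paper works with the same functions $w_n$ of \refe{wn}., the same identity \refe{ewnu}., extracts a weak-$*$ limit from the two-sided uniform bounds given by \refe{C2Rd}., and proves $\De u\big(X(n,x)\big)\to 0$ by observing that every limit point of a bounded forward trajectory of a gradient system is a critical point (quoting \cite{HSD}), so that \refe{C1Rd}. and dominated convergence kill the right-hand side of the weak formulation; your part $ii)$ (evaluation at critical points, integration along trajectories, the bound $2\|\ln\si\|_{L^\infty}$) is exactly the intended argument. There are two genuine points of divergence. First, your remark that $X(t,x+k)=X(t,x)+k$ forces every $w_n$ to be $Y$-periodic, so that the weak-$*$ limit is automatically periodic, is correct and is a real simplification: the paper instead passes to a limit $\si_0\in L^\infty(\RR^d)$ and only afterwards periodizes it by averaging the translates $\si_0(\cdot+k)$ over $k\in\ZZ^d\cap[-n,n]^d$, referring to \cite{BMT}; your shortcut makes that last step unnecessary. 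Second, the forward/backward dichotomy you single out as the main obstacle is a real issue, but the paper does not address it either: its proof begins with ``let $x$ be such that the trajectory, say $X([0,\infty),x)$, is bounded'' and then runs only the forward-time construction, concluding that $e^{w_n}\De u\big(X(n,\cdot)\big)\to 0$ almost everywhere; points for which only the backward trajectory is bounded are never revisited. So the patching of two one-sided conductivities across a flow-invariant measurable interface, which you honestly leave open, is not an idea you failed to extract from the paper --- it is a soft spot of the published argument itself, and your write-up is at least as complete as the original on this point.

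If you want to actually close that gap, I would not try to patch measurable sets. Note instead that \refe{unw}. holds with $n$ replaced by any real $t$, so that $\div\left(e^{w_t}\nabla u\right)=e^{w_t}\,\De u\big(X(t,\cdot)\big)=\partial_t\big(e^{w_t}\big)$ pointwise. The time averages $\si_N:={1\over N}\int_0^N e^{w_t}\,dt$ are then $Y$-periodic, pinched between $e^{-C}$ and $e^{C}$, and satisfy $\div\left(\si_N\nabla u\right)=N^{-1}\left(e^{w_N}-1\right)\to 0$ uniformly, so any weak-$*$ limit point does the job with no case analysis on trajectories at all. Since this appears to dispense with the bounded-trajectory hypothesis altogether (and so points toward Conjecture~\ref{conj1}), treat it with suspicion and verify each interchange of integrals, but every identity it uses is already displayed in the paper.
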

\begin{proof}
\hfill\par\ss\noindent $i)$ For the sake of simplicity $Y=[0,1]^d$. Consider the function $w_n$ defined by \refe{wn}.. Let $x\in\RR^d$ be such that the trajectory say $X\big([0,\infty),x\big)$ is bounded. Consider any subsequence $k_n$ of $n$ such that $X(k_n,x)$ converges to some $x_*\in\RR^d$. The point $x_*$ is necessarily a critical point of $u$ (see, {\em e.g.}, \cite{HSD} Proposition p.~206). Hence, by \refe{C1Rd}. and \refe{C2Rd}. the sequence $e^{w_{k_n}(x)}\De u\big(X(k_n,x)\big)$ converges to $0$. Therefore, the whole sequence $e^{w_n}\De u\big(X(n,\cdot)\big)$ converges to $0$ almost everywhere in $\RR^d$. By \refe{C2Rd}. this sequence is also bounded in $L^\infty(\RR^d)$. It thus follows from Lebesgue's dominated convergence theorem that $e^{w_n}\De u\big(X(n,\cdot)\big)$ converges strongly to $0$ in $L^1_{\rm loc}(\RR^d)$. However, again using estimate \refe{C2Rd}., up to a subsequence $e^{w_n}$ converges weakly-$*$ in $L^\infty(\RR^d)$ to some positive function $\si_0$, with $\si_0^{-1}\in L^\infty(\RR^d)$. Therefore, passing to the limit $n\to\infty$ in \refe{ewnu}., we get that $\div\left(\si_0\nabla u\right)=0$ in the distributions sense on $\RR^d$. Finally, up to extract a new subsequence the average $\si_n$ defined for any positive integer $n$ by
\[
\sigma_n(x):={1\over(2n+1)^d}\sum_{k\in \ZZ^d\cap[-n,n]^d}\sigma_0(x+k),\quad\mbox{for }x\in\RR^d,
\]
converges weakly-$*$ in $L^\infty(\RR^d)$ to some $Y$-periodic function $\si$ which does the job as in the non-critical case. We refer to \cite{BMT} for further details about this average argument.
\par\ms\noindent $ii)$ The proof is quite similar to the part $ii)$ of Theorem~\ref{thm.stable}.
\end{proof}
\begin{Rem}\label{rem.d=2}
The dimension two is quite particular in the framework of Conjecture~\ref{conj1} and Theorem~\ref{thm.globsing}. Indeed, consider a smooth potential $u$ defined in $\RR^2$ such that $\nabla u$ is $Y$-periodic and is not identically zero in $\RR^2$. Also assume that there exists a smooth $Y$-periodic conductivity $\si>0$ such that $\div\left(\si\nabla u\right)=0$ in $\RR^2$. Then, thanks to Proposition~2 in \cite{AlNe} the function $u$ has no critical point in $\RR^2$. Moreover, the trajectories $X\big([0,\infty),x\big)$ and $X\big((-\infty,0],x\big)$ are unbounded, since any limit point of a bounded trajectory is a critical point of the gradient (see \cite{HSD} Proposition p.~206). Therefore, the absence of critical point agrees with the unboundedness of the trajectories in the two-dimensional smooth periodic case.
\end{Rem}
The following example illustrates the strong connection between the isotropic realizability and estimate \refe{C2Rd}..
\subsection{Application}
Let $u_i$, for $i\in\{1,\dots,d\}$, be a function in $C^2(\RR)$ such that its derivative $u_i'$ is $T_i$-periodic for some $T_i>0$ and has only isolated roots $t\in\RR$ also satisfying $u_i''(t)=0$.
Define $u\in C^2(\RR^d)$ by
\beq\label{uui}
u(x):=\sum_{i=1}^d u_i(x_i),\quad\mbox{for }x=(x_1,\dots,x_d)\in\RR^d.
\eeq
The gradient field $\nabla u$ is $Y$-periodic with $Y:=\prod_{i=1}^d[0,T_i]$, the critical points of $u$ are isolated and condition \refe{C1Rd}. is satisfied.
\par
Then, we have the following result:
\begin{Pro}\label{pro.ui}
The trajectories of the gradient system \refe{X}. are bounded if and only if each function $u'_i$ vanishes in $\RR$.
Moreover, the following assertions are equivalent:
\[
\mbox{estimate \refe{C2Rd}. holds},
\]
\beq\label{ui'­0}
\forall\,x\in\RR^d,\quad\prod_{i=1}^d u_i'(x_i)\neq 0,
\eeq
\beq\label{siui'}
\nabla u\mbox{ is isotropically realizable with a positive function }\si\in C^1_\sharp(Y).
\eeq
\end{Pro}
\begin{Exa}
The function $u(x):=x_1-\cos(2\pi x_2)$ from \cite{BMT} is a particular case of this framework. It clearly does not satisfy condition~\refe{ui'­0}.. We thus recover that $\nabla u$ is not isotropically realizable in the torus.
\end{Exa}
\noindent
{\bf Proof of Proposition~\ref{pro.ui}.}
First, let us determine conditions for having bounded trajectories of system \refe{X}.:
\par
Let $i\in\{1,\dots,d\}$ and $x\in\RR^d$.
If the derivative $u_i'$ vanishes in $\RR$, then by periodicity it has an infinite number of zeros which are isolated by assumption. Consider two consecutive zeros $\al_i<\be_i$ of $u_i'$. Assume that $x_i$ belongs to $(\al_i,\be_i)$. Then, the $i$-th component $X_i(\cdot,x)$ of the trajectory $X(\cdot,x)$ remains in $(\al_i,\be_i)$. Indeed, $X_i(\cdot,x)$ cannot meet one of the sets $\{\al_i\}$ or $\{\be_i\}$ which are themselves two trajectories. 
Conversely, if the derivative $u_i'$ does not vanish in $\RR$, then $X_i(\cdot,x)$ is given by
\beq\label{Xi}
X_i(t,x)=v_i^{-1}\big(t+v_i(x_i)\big),\quad\mbox{where}\quad v_i(y):=\int_{x_i}^y {ds\over u_i'(s)}\;\mathop{\longrightarrow}_{|y|\to\infty}\;\pm\infty,
\eeq
and $v_i^{-1}$ is the reciprocal function of $v_i$. Therefore, the trajectory $X_i(\cdot,x)$ is not bounded.
\par\ms\noindent
$\underline{\refe{C2Rd}.\Rightarrow\refe{ui'­0}.}$: For $i\in\{1,\dots,n\}$ and for $x\in\RR^d$, we have $X_i(\RR,x)=(\al_i,\be_i)$, where either $\al_i<\be_i$ are two consecutive zeros of $u_i'$ or $(\al_i,\be_i)=\RR$ if $u_i'$ does not vanish in $\RR$. Let $v_i$ be a primitive of $1/u_i'$ on $(\al_i,\be_i)$. Making the change of variables $y=v_i^{-1}\big(s+v_i(x_i)\big)$, we get that for $t\geq 0$,
\beq
\ba{ll}
\dis \int_0^t \De u\big(X(s,x)\big)\,ds & \dis =\sum_{i=1}^d\int_0^t u_i''\left((v_i^{-1}\big(s+v_i(x_i)\big)\right)ds
=\sum_{i=1}^d\int_{x_i}^{v_i^{-1}(t+v_i(x_i))}{u_i''(y)\over u_i'(y)}\,dy
\\ \ecart
& \dis =\ln\left(\prod_{i=1}^d{\left|u_i'\left(v_i^{-1}\big(t+v_i(x_i)\big)\right)\right|\over\left|u_i'(x_i)\right|}\right)
\ea
\eeq
which is bounded independently of $t$ and $x$. Or equivalently, there exists a constant $c>0$ such that
\beq\label{estui'}
\forall\,t\geq 0,\ \forall\,x\in\RR^d,\quad c\leq\prod_{i=1}^d{\left|u_i'\left(v_i^{-1}\big(t+v_i(x_i)\big)\right)\right|\over\left|u_i'(x_i)\right|}\leq c^{-1}.
\eeq
However, since by \refe{Xi}. $v_i^{-1}\big(t+v_i(x_i)\big)$ converges to $\al_i$ or $\be_i$ as $t\to\infty$, estimate \refe{estui'}. implies that each function $u_i'$ cannot vanish in $\RR$.
\par\ms\noindent
$\underline{\refe{ui'­0}.\Rightarrow\refe{siui'}.}$: It is easy to see that the conductivity
\beq
\si(x):=\left|\,\prod_{i=1}^d u_i'(x_i)\,\right|^{-1}>0,\quad\mbox{for }x=(x_1,\dots,x_d)\in\RR^d,
\eeq
does the job.
\par\ms\noindent
$\underline{\refe{siui'}.\Rightarrow\refe{C2Rd}.}$: It is a straightforward consequence of Theorem~\ref{thm.globsing} $ii)$.
\cqfd
\par\bs\noindent
{\bf Acknowledgment.} The author is very grateful to G.W. Milton for stimulating discussions on the topic.
\appendix
\section{A global rectification theorem in dimension two}
\begin{Thm}\label{thm.globrect}
Let $u$ be a function in $C^3(\RR^2)$ satisfying condition \refe{Du­0}.. Then, there exists a function $\Phi$ in $C^1(\RR^2;\RR^2)$ such that the jacobian matrix $\nabla\Phi$ is invertible in $\RR^2$ and
\beq\label{DPhi}
\nabla\Phi\,\nabla u=e_1=\begin{pmatrix} 1 \\ 0 \end{pmatrix}\quad\mbox{in }\RR^2.
\eeq
\end{Thm}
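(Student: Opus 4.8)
The plan is to rectify $\nabla u$ by flowing out of the zero level set, using a reparametrized gradient flow to keep everything global. Since the non-vanishing condition $\inf_{\RR^2}|\nabla u|>0$ forbids critical points, $\Si:=\{u=0\}$ is a regular $C^3$ level set, and a Jordan-curve argument shows it has no compact component (a closed level curve would bound a disc on which $u$ would attain an interior extremum, hence a critical point); thus each component of $\Si$ is a properly embedded line, and I can fix a global regular $C^3$ parametrization and record it as a map $\la:\Si\to\RR$ with non-vanishing tangential derivative. To avoid finite-time blow-up of the genuine flow \refe{X}. I would work with $V:=\nabla u/|\nabla u|^2$, which is $C^2$ and bounded by $1/\de$ with $\de:=\inf_{\RR^2}|\nabla u|$; its flow $Y(t,x)$ (solving $\partial_tY=V(Y)$, $Y(0,\cdot)=\mathrm{id}$) is then defined for all $t\in\RR$, is $C^2$ in $(t,x)$, and satisfies $u\big(Y(t,x)\big)=u(x)+t$. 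Consequently every orbit meets $\Si$ exactly once, at reparametrized time $-u(x)$, and I set $\pi(x):=Y\big(-u(x),x\big)\in\Si$.

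Next I would define $\Phi=(\Phi_1,\Phi_2)$ on all of $\RR^2$ by $\Phi_2:=\la\circ\pi$ and
\[
\Phi_1(x):=\int_0^{u(x)}{d\th\over\left|\nabla u\big(Y(\th,\pi(x))\big)\right|^2},
\]
which is the time the genuine flow \refe{X}. takes to travel from $\Si$ to $x$ along their common orbit; this integral is finite because the integrand is bounded by $\de^{-2}$. Both functions are $C^1$ since $u\in C^3$ and $Y,\pi$ are $C^2$. The transport identities are then immediate: $\Phi_2$ is constant along orbits, so $\nabla\Phi_2\cdot\nabla u=0$, while $\Phi_1\big(X(s,x)\big)=\Phi_1(x)+s$ along \refe{X}., and differentiating at $s=0$ gives $\nabla\Phi_1\cdot\nabla u=1$. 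Together these read exactly $\nabla\Phi\,\nabla u=e_1$.

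The last point is the invertibility of $\nabla\Phi$. Writing vectors in the orthogonal frame $(\nabla u,\nabla u^\perp)$, where $\nabla u^\perp$ denotes $\nabla u$ rotated by a right angle, the relation $\nabla\Phi_2\cdot\nabla u=0$ forces $\nabla\Phi_2=\mu\,\nabla u^\perp$, and then $\det\nabla\Phi=\nabla\Phi_1\wedge\nabla\Phi_2=\mu$; so invertibility amounts to $\nabla\Phi_2\neq0$ everywhere. To get this globally I would examine the local parametrization $\Theta(t,\la):=Y\big(t,q(\la)\big)$, where $q$ parametrizes a piece of $\Si$: its two columns at $t=0$ are proportional to $\nabla u$ and to $\nabla u^\perp$, so $\det D\Theta(0,\la)\neq0$, and the planar Liouville identity ${d\over dt}\det D\Theta=\div(V)(\Theta)\,\det D\Theta$ gives $\det D\Theta(t,\la)=\det D\Theta(0,\la)\,\exp\!\big(\int_0^t\div(V)(\Theta)\,ds\big)\neq0$. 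Hence $\Theta$ is everywhere a local diffeomorphism, $\Phi$ coincides locally with its $C^1$ inverse, and $\nabla\Phi$ is invertible throughout $\RR^2$.

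The genuinely global issues are where I expect the difficulty to sit: ruling out blow-up of the flow \refe{X}. (dissolved by the bounded-speed reparametrization $V$, which also guarantees that each orbit actually reaches $\Si$), excluding compact components of $\Si$, and above all the non-degeneracy $\nabla\Phi_2\neq0$ on all of $\RR^2$ -- that is, the assertion that gradient trajectories never focus -- for which the Liouville determinant identity is the decisive tool. Note finally that $\Phi$ is not required to be injective, only globally defined and a local diffeomorphism, so any disconnectedness of $\Si$ is harmless.
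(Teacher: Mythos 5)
Your proof is correct, but it follows a genuinely different route from the paper's. The paper takes $\Phi=(-\tau,v)$ where $\tau$ is the travel time to the equipotential $\{u=0\}$ (borrowed, together with its $C^1$ regularity, from Theorem~2.15 of \cite{BMT}) and $v$ is a stream function of the divergence-free current $\si\nabla u$, $\si$ being the conductivity $\exp\bigl(\int_0^{\tau(x)}\De u(X(s,x))\,ds\bigr)$; with that choice the identities $\nabla\tau\cdot\nabla u=-1$ and $\nabla v\cdot\nabla u=0$ come from the semi-group property and from $\si\nabla u=\nabla^\perp v$, and the invertibility of $\nabla\Phi$ is essentially free, since $|\nabla v|=\si\,|\nabla u|>0$ rules out the only possible degeneracy. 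Your first coordinate is the same object ($\Phi_1=-\tau$, reconstructed via the bounded reparametrized field $\nabla u/|\nabla u|^2$, which is a clean, self-contained way to get completeness of the flow and the regularity of the hitting time without invoking \cite{BMT}), but your second coordinate is a level-set parameter pulled back along orbits rather than a stream function. This costs you two extra ingredients the paper never needs: the topological discussion of $\Si=\{u=0\}$ (no compact components, properly embedded lines --- in fact your own projection $\pi$ shows $\Si$ is connected, since the preimages of its components would disconnect $\RR^2$) and the flow-box/Liouville argument for non-degeneracy; what it buys is a proof independent of the realizability machinery and of the stream-function construction, hence closer in spirit to a genuinely global rectification theorem. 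One sentence should be corrected: the local inverse of your chart $\Theta(t,\la)=Y(t,q(\la))$ is $(u,\Phi_2)$, not $\Phi=(\Phi_1,\Phi_2)$; the conclusion you need, namely $\nabla\Phi_2\neq 0$, still follows because $\Phi_2$ is exactly the second component of $\Theta^{-1}$, whose Jacobian is invertible, and then your frame computation $\det\nabla\Phi=\pm\mu$ with $\nabla\Phi_2=\mu\,\nabla u^\perp$ finishes the argument.
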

\begin{Rem}
The function $\Phi$ is a local but is not necessarily a global diffeomorphism on $\RR^2$. However, the gradient system \refe{X}. $X'=\nabla u(X)$ is changed into the system $Y'=e_1$ by applying the chain rule to $Y=\Phi(X)$. Therefore, the mapping $\Phi$ rectifies globally in $\RR^2$ the gradient field $\nabla u$ into the vector $e_1$.
\end{Rem}
\begin{proof}
Following Theorem~2.15 in \cite{BMT}, by condition \refe{Du­0}.  there exists a unique function $\tau$ in $C^1(\RR^2)$ which satisfies the equation $u\big(X(\tau(x),x)\big)=0$ for any $x\in\RR^2$. The semi-group property satisfied by the flow $X(t,x)$ combined with the uniqueness of $\tau$ also yields
\beq\label{tauX}
\tau\big(X(t,x)\big)=\tau(x)-t,\quad\forall\,(t,x)\in\RR\times\RR^2.
\eeq
Hence, taking the derivative of \refe{tauX}. with respect to $t$, we have
\beq
{\partial\over\partial t}\big[\tau\big(X(t,x)\big)\big]=\nabla\tau\big(X(t,x)\big)\cdot\nabla u\big(X(t,x)\big)=-1,\quad\forall\,x\in\RR^2,
\eeq
which implies that
\beq\label{DtauDu}
\nabla\tau\cdot\nabla u=-1\quad\mbox{in }\RR^2.
\eeq
On the other hand, again by Theorem~2.15 of \cite{BMT} the function $\si$ defined by
\beq\label{sitau}
\si(x):=\exp\left(\int_0^{\tau(x)}\De u\big(X(s,x)\big)\,ds\right),\quad\mbox{for }x\in\RR^2,
\eeq
solves the equation $\div\left(\si\nabla u\right)=0$ in $\RR^2$. It follows that the function $v$ defined by
\beq\label{v}
v(x):=\int_0^{x_1}\si(s,x_2)\,{\partial u\over\partial x_2}(s,x_2)\,ds-\int_0^{x_2}\si(0,t)\,{\partial u\over\partial x_2}(0,t)\,dt,
\quad\mbox{for }x\in\RR^2,
\eeq
is a stream function in $C^1(\RR^2)$ associated with the divergence free current $\si\nabla u$, hence
\beq\label{siDuDw}
\si\nabla u=\nabla^\perp v\quad\mbox{and thus}\quad\nabla v\cdot\nabla u=0\quad\mbox{in }\RR^2.
\eeq
Therefore, by \refe{DtauDu}. and \refe{siDuDw}. the function $\Phi=\left(-\tau,v\right)$ belongs to $C^1(\RR^2;\RR^2)$ and satisfies equality \refe{DPhi}..
\par
Finally, assume that the matrix $\nabla\Phi(x)$ is not invertible for some $x\in\RR^2$. Then, since $\nabla\tau(x)\neq 0$, there exists $\al\in\RR$ such that $\nabla v(x)=\al\,\nabla\tau(x)$. Hence, by \refe{DtauDu}. and \refe{siDuDw}. we get that
$0=\big(\nabla v(x)-\al\,\nabla\tau(x)\big)\cdot\nabla u(x)=\al$, which yields $\nabla v(x)=0$. However, the condition \refe{Du­0}. combined with the first equality of \refe{siDuDw}. implies that $\nabla v(x)\neq 0$, which leads to a contradiction. Therefore, $\nabla\Phi$ is invertible in $\RR^2$ and $\Phi$ is a local diffeomorphism in $\RR^2$. It is not clear that $\Phi$ is one-to-one to conclude that $\Phi$ is diffeomorphism from $\RR^2$ onto $\RR^2$.
\end{proof}
\end{document}